\newtheorem{thm}{Theorem}[section]
\newtheorem{cor}[thm]{Corollary}
\newtheorem{lem}[thm]{Lemma}
\newtheorem{prop}[thm]{Proposition}
\newtheorem{claim}[thm]{Claim}
\newtheorem{fact}[thm]{Fact}
\newcommand{\hh}{\mathcal{H}}
\newcommand{\hb}{\mathcal{B}}
\newcommand{\hd}{\mathcal{D}}
\newcommand{\hk}{\mathcal{K}}
\newcommand{\hf}{\mathcal{F}}
\newcommand{\hg}{\mathcal{G}}
\newcommand{\he}{\mathcal{E}}
\renewcommand{\l}{\left}
\renewcommand{\r}{\right}
\title{On the Matching Problem in Random Hypergraphs}
\author{Peter Frankl\footnote{R\'{e}nyi Institute, Budapest, Hungary. Email: \texttt{frankl.peter@renyi.hu}.}\quad\quad
Jiaxi Nie\footnote{School of Mathematics, Georgia Institute of Technology, Atlanta, GA 30332, USA. Email: \texttt{jnie47@gatech.edu}.}\quad\quad
Jian Wang\footnote{Department of Mathematics, Taiyuan University of Technology, Taiyuan, 030024, China. Research supported by National Natural Science Foundation of China No. 12471316. Email: \texttt{wangjian01@tyut.edu.cn}.}
}
\date{}
\begin{document}

\maketitle

\begin{abstract}
We study a variant of the Erd\H{o}s Matching Problem in random hypergraphs. Let $\hk_p(n,k)$ denote the Erd\H{o}s-Rényi random $k$-uniform hypergraph on $n$ vertices where each possible edge is included with probability $p$. We show that when $n\gg k^{2}s$ and $p$ is not too small, with high probability, the maximum number of edges in a sub-hypergraph of $\hk_p(n,k)$ with matching number $s$ is obtained by the trivial sub-hypergraphs, i.e. the sub-hypergraph consisting of all edges containing at least one vertex in a fixed set of $s$ vertices.
\end{abstract}

\section{Introduction}
In this paper, we consider the so-called Tur\'{a}n problem concerning matchings in random hypergraphs. Let $\hk(n,k)$ denote the complete $k$-graph on $n$ vertices--often denoted as $\binom{[n]}{k}$. For a fixed $p$, $0<p<1$ let $\hk_p=\hk_p(n,k)$ denote the {\it random} $k$-graph that we obtain by choosing each edge of $\binom{[n]}{k}$ {\it independently} and with probability $p$. By the law of large numbers with (very) high probability $|\hk_p|/\binom{n}{k}$ is very close to $p$.

Similarly, for every $x\in [n]$, the full star $\hk_p[x]=\{K\in \hk_p\colon x\in K\}$ has size very close to $p\binom{n-1}{k-1}$. E.g., by Chernoff's inequality
\begin{align}\label{ineq-chernoff}
Pr\left(\left|\frac{|\hk_p[x]|}{p\binom{n-1}{k-1}}-1\right|>\varepsilon\right)<2e^{-\frac{\varepsilon^2}{3}p\binom{n-1}{k-1}}.
\end{align}

The central problem that we consider is the following. Let $s$ be a fixed positive integer. Try and determine the size and structure of largest subhypergraphs of $\hk_p$ which do not contain $s+1$ pairwise disjoint edges.

To put this problem into context let us first recall the corresponding problem for the complete $k$-graph, $\binom{[n]}{k}$, that is, the case $p=1$.

\vspace{5pt}
{\bf\noindent Erd\H{o}s Matching Conjecture \cite{erdos1965problem}.} Suppose that $n,k,s$ are positive integers, $n\geq k(s+1)$ and $\he\subset \binom{[n]}{k}$ contains no $s+1$ pairwise disjoint edges. Then
\begin{align}\label{ineq-emc}
|\he| \leq \max\left\{\binom{k(s+1)-1}{k}, \binom{n}{k}-\binom{n-s}{k}\right\}.
\end{align}
\vspace{5pt}

Note that the simple constructions giving rise to the formulae on the right hand side are
\[
\binom{[k(s+1)-1]}{k} \mbox{ and }\he_T=\left\{E\in \binom{[n]}{k}\colon E\cap T\neq \emptyset\right\},
\]
where $T$ is some fixed $s$-subset of $[n]$. There has been a lot of research going on concerning this conjecture. Let us recall some of the results.

The current best bounds establish \eqref{ineq-emc} for $n>(2s+1)k-s$ ~\cite{frankl2013improved} and for $s>s_0$, $n>\frac{5}{3}sk$ ~\cite{frankl2022erdHos}.

The $s=1$ case has received the most attention. A family $\hf\subset \binom{[n]}{k}$ is called {\it intersecting} if $\hf$ contains no two pairwise disjoint members (edges). Say that $\hf$ is a \emph{star} if there is a vertex that lies in all edges.

\begin{thm}[Erd\H{o}s-Ko-Rado Theorem, \cite{erdos1961intersection}]
Suppose that $\hf\subset \binom{[n]}{k}$ is intersecting and $n\geq 2k$. Then
\[
|\hf| \leq \binom{n-1}{k-1}.
\]
Further, when $n\ge 2k+1$, the equality holds only for a star.
\end{thm}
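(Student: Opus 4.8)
The plan is to prove this by Katona's cyclic-permutation (``circle'') method.

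The key combinatorial input is an arc lemma. Fix an oriented cyclic ordering of $[n]$ and, for $0\le i\le n-1$, let $A_i=\{i,i+1,\dots,i+k-1\}$ (indices taken mod $n$) be the arc of $k$ consecutive elements starting at $i$. I claim that if $n\ge 2k$ and $\mathcal{A}\subseteq\{A_0,\dots,A_{n-1}\}$ is intersecting, then $|\mathcal{A}|\le k$. Indeed, we may assume $A_0\in\mathcal{A}$; an arc distinct from $A_0$ meets $A_0$ iff it is one of $A_{-(k-1)},\dots,A_{-1},A_1,\dots,A_{k-1}$, and since $n\ge 2k$ the arcs $A_j$ and $A_{j-k}$ are disjoint for each $1\le j\le k-1$. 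These $k-1$ disjoint pairs partition the $2k-2$ arcs other than $A_0$ that meet $A_0$, and $\mathcal{A}$ contains at most one arc from each pair, so $|\mathcal{A}|\le 1+(k-1)=k$.

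Next I would double count the pairs $(C,A)$ where $C$ is an oriented cyclic ordering of $[n]$ and $A$ is a length-$k$ arc of $C$ lying in $\hf$. Each of the $(n-1)!$ orderings contributes at most $k$ such pairs by the arc lemma. On the other hand, a fixed $F\in\hf$ occurs as an arc in exactly $k!\,(n-k)!$ orderings: arrange the elements of $F$ inside a block ($k!$ ways) and then arrange this block together with the remaining $n-k$ elements into an oriented cycle ($(n-k)!$ ways). Hence $|\hf|\cdot k!\,(n-k)!\le k\,(n-1)!$, which rearranges to $|\hf|\le\binom{n-1}{k-1}$, as desired.

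For the characterization of equality when $n\ge 2k+1$, I would first upgrade the arc lemma: for $n>2k$, an intersecting family of exactly $k$ arcs must consist of $k$ consecutive arcs $A_j,A_{j+1},\dots,A_{j+k-1}$, which all contain the single common element $j+k-1$; this follows from a short (if slightly fiddly) case analysis of which arc is chosen from each disjoint pair above, using $n>2k$ to keep the relevant arcs genuinely disjoint. Granting this, if $|\hf|=\binom{n-1}{k-1}$ then equality holds in the double count for every cyclic ordering $C$, so the $\hf$-arcs of $C$ form a consecutive block with a common vertex $v_C$. Comparing orderings that differ by an adjacent transposition and tracking how $v_C$ can change then forces a single vertex to lie in all members of $\hf$, so $\hf$ is the star at that vertex. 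I expect this last connectivity step — propagating the local ``common vertex'' information across all cyclic orderings — to be the main obstacle; alternatively, the uniqueness can be obtained via the shifting/compression technique or by invoking the Hilton--Milner theorem, at the cost of a longer argument.
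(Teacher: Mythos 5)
The paper does not prove the Erd\H{o}s--Ko--Rado theorem; it is quoted as a known classical result with a citation to Erd\H{o}s, Ko, and Rado (1961) and used as background. So there is no in-paper proof to compare against. On its own merits, your Katona circle-method argument for the bound $|\hf|\le\binom{n-1}{k-1}$ is complete and correct: the arc lemma (at most $k$ arcs from any cyclic ordering can pairwise intersect when $n\ge 2k$) and the double count $|\hf|\cdot k!\,(n-k)!\le k\,(n-1)!$ are exactly right. This is a genuinely different and more elementary route than the original shifting proof of Erd\H{o}s--Ko--Rado, and is the standard modern presentation.

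The uniqueness part is where there is a real gap, and you have already flagged it yourself. Two remarks. First, your upgraded arc lemma is fine: for $n>2k$ any two intersecting arcs $A_i,A_j$ have cyclic index distance at most $k-1$, and since $n>2(k-1)$ a set of indices pairwise within distance $k-1$ lies in a single interval of length $k-1$; so $k$ pairwise intersecting arcs must be $k$ consecutive ones sharing a common point. Second, the step you call ``the main obstacle'' --- showing that the local common vertex $v_C$ is the same vertex for every cyclic ordering $C$ --- is genuinely the crux, and as written it is only a plan, not a proof. The adjacent-transposition argument does work (a transposition not touching the arc block leaves $v_C$ fixed, and a careful case check of transpositions near the block shows $v_C$ cannot jump), but until those cases are written out the uniqueness claim is not established. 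Your fallback suggestion (Hilton--Milner, or shifting with an equality analysis) is a sound alternative and is how uniqueness is most often handled; either route would close the gap.
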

Concerning the probabilistic versions of EKR, Balogh, Bohman and Mubayi\cite{balogh2009erdHos} have proved several strong results. Here we mention some of them. Let $f(n)$, $g(n)$ be functions of $n$. The notation $f(n)\ll g(n)$ means $f(n)/g(n)\rightarrow 0$ as $n\rightarrow \infty$.
\begin{thm}[\cite{balogh2009erdHos}]
When $k\ll n^{1/4}$ and $p\in [0,1]$, w.h.p. the maximum sized intersecting subhypergraph in $\hk_p(n,k)$ is a star. Furthermore, the same conclusion also holds when $n^{1/4}\ll k\ll n^{1/3}$, and $p\gg n^{-1/4}/\binom{n-1}{k-1}$ or $p\ll k^{-1}/\binom{n-1}{k-1}$.
\end{thm}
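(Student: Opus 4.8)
\medskip
\noindent\textbf{Proof proposal.}
The plan is a first‑moment/union‑bound argument, organised around the parameter $D:=p\binom{n-1}{k-1}$ (the expected size of a full star) and split into a \emph{sparse} and a \emph{dense} regime. The structural engine is classical: by the Erd\H{o}s--Ko--Rado/Hilton--Milner circle of results, any intersecting $\hf\subseteq\binom{[n]}{k}$ that is not a star already contains a bounded ``obstruction'' --- typically three edges $A,B,C\in\hf$ with $A\cap B\cap C=\emptyset$, and in the remaining (few) cases a subfamily of at most $k+1$ edges with empty common intersection spanning only $O(k)$ vertices. Every edge of $\hf$ must then meet all edges of the obstruction; in the typical case $\hf\subseteq\{E\in\hk_p\colon E\text{ meets }A,B,C\}$, and since such an $E$ needs at least two ``contact'' vertices inside the $O(k)$‑vertex set $A\cup B\cup C$, the number of such $E$ in $\binom{[n]}{k}$ is $N=O\!\bigl(\tfrac{k^{3}}{n}\binom{n-1}{k-1}\bigr)$. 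Thus a large non‑star intersecting subgraph is trapped inside a ``host'' set that $\hk_p$ meets with density only $O(k^{3}/n)$ --- this is what we cash in.

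\medskip
\noindent\textbf{Sparse regime ($p\ll k^{-1}/\binom{n-1}{k-1}$, i.e.\ $kD\to 0$).}
Here I would simply first‑moment the obstructions. The expected number of triangles (three pairwise‑intersecting edges with empty common intersection) in $\hk_p$ is $\Theta\!\bigl((kD)^{3}\bigr)\to 0$, and the expected number of the degenerate non‑star, triangle‑free configurations --- which live on $O(k)$ vertices --- is likewise $o(1)$ (this last estimate uses $k\ll n^{1/2}$). By Markov's inequality, w.h.p.\ $\hk_p$ contains no such obstruction, so every intersecting subhypergraph of $\hk_p$ is a star; a fortiori the largest one is. (If $\hk_p$ is empty the claim is vacuous.)

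\medskip
\noindent\textbf{Dense regime ($p\gg n^{-1/4}/\binom{n-1}{k-1}$, i.e.\ $D\gg n^{-1/4}$).}
First, $nD^{4}\to\infty$, so by a second‑moment computation over the $n$ vertices some full star has size $\ge 4$ w.h.p.; if in addition $D\gg\log n$, then \eqref{ineq-chernoff} and a union bound over vertices give that \emph{every} full star has size $(1+o(1))D$. Now I bound the largest non‑star intersecting subgraph: by the structural reduction it suffices to control $\max_{T}X_{T}$ over triangles $T=\{A,B,C\}$ present in $\hk_p$, where $X_{T}:=|\{E\in\hk_p\colon E\text{ meets }A,B,C\}|$. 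For a fixed $T$, conditioned on $T\subseteq\hk_p$, $X_{T}$ is $3+\mathrm{Bin}(N-3,p)$ with $N=O\!\bigl(\tfrac{k^{3}}{n}\binom{n-1}{k-1}\bigr)$, hence of mean $\mu=O\!\bigl(\tfrac{k^{3}}{n}D\bigr)$. Summing $\Pr[T\subseteq\hk_p,\ X_{T}\ge m]$ over the $\Theta\!\bigl(k^{3}\binom{n-1}{k-1}^{3}\bigr)$ triangles --- weighting by the probability $p^{3}$ that $T$ itself is present and using independence of the remaining edges meeting $T$ --- yields
\[
\Pr\bigl[\exists\text{ non-star intersecting }\hf\text{ with }|\hf|\ge m\bigr]\ \le\ \Theta\!\bigl((kD)^{3}\bigr)\cdot\Pr[\mathrm{Bin}(N,p)\ge m-3].
\]
Taking $m$ to be the high‑probability lower bound on the best full star and using the Chernoff tail $\Pr[\mathrm{Bin}(N,p)\ge t]\le(e\mu/t)^{t}$, one checks this tends to $0$ exactly when $k\ll n^{1/3}$ (so the host set is genuinely sparse, $\mu\ll D$) and $D\gg n^{-1/4}$ (so the best star, already of size $\ge 4$, strictly beats whatever the obstruction can support). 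When $D\gg\log n$ this is even cleaner: one shows $X_{T}\le 2\mu\ll D$ w.h.p.\ simultaneously for all triangles $T$, so no non‑star family can reach $(1-o(1))D$.

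\medskip
\noindent\textbf{Main obstacle.}
The delicate point is forcing the dense‑regime union bound to converge for \emph{all} $p\gg n^{-1/4}/\binom{n-1}{k-1}$, not merely for $p$ large. When $D$ is only polynomially small, the best full star has bounded size $\asymp\log n/\log(1/D)$, so one must prove the obstruction supports \emph{strictly fewer} edges than that; this is where the precise exponents $n^{1/4}$ and $n^{1/3}$ enter, and it is also why the window $k^{-1}/\binom{n-1}{k-1}\ll p\ll n^{-1/4}/\binom{n-1}{k-1}$ is left out of the statement (there a single random triangle of size $3$ can out‑perform every star, since then no star has size $4$). A secondary irritation is that a non‑star intersecting family need not literally contain a $3$‑edge triangle; the genuinely triangle‑free non‑star families are tiny and must be first‑moment'd away by hand, which is the part of the argument where $k\ll n^{1/4}$ (or, for larger $k$, the extra condition on $p$) does additional work.
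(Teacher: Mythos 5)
The paper does not prove this theorem: it is quoted from Balogh, Bohman and Mubayi~\cite{balogh2009erdHos} without proof, so there is no in-paper argument to compare your sketch against. Taken on its own, your outline follows the natural shape of the argument in~\cite{balogh2009erdHos}: normalize by $D=p\binom{n-1}{k-1}$, reduce non-star intersecting families to a bounded obstruction on $O(k)$ vertices, first-moment the obstructions when $kD\to 0$, and otherwise compare the largest obstruction-supported family against the best full star via Chernoff plus a union bound.

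Two steps, though, are asserted rather than derived, and they carry the theorem's content. First, you treat the three-edge ``triangle'' $A\cap B\cap C=\emptyset$ as the generic obstruction and relegate triangle-free Hilton--Milner-type non-star families to ``tiny\ldots first-moment'd away by hand.'' But a non-star intersecting family need not contain any three edges with empty common intersection, and it is precisely the first-moment estimate over these $O(k)$-vertex configurations that is responsible for the $k\ll n^{1/4}$ cutoff and the extra hypotheses on $p$ when $n^{1/4}\ll k\ll n^{1/3}$; dismissing this as a ``secondary irritation'' hides the heart of the proof. Second, in the dense regime your bound has the form $(kD)^3\cdot\mathrm{Pr}\bigl[\mathrm{Bin}(N,p)\ge m-3\bigr]$ with $m$ the size of the best star, and you then say ``one checks this tends to $0$ exactly when $k\ll n^{1/3}$ and $D\gg n^{-1/4}$.'' When $D$ is polynomially small, $m$ is a bounded integer essentially determined by $nD^m\asymp 1$, and one must verify $(kD)^3(Np)^{m-3}\to 0$ uniformly across all admissible $p$ and all resulting integer values of $m$; this is a nontrivial case analysis, and asserting that it works ``exactly'' in the stated range restates the theorem rather than proving it.
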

For more on the probabilistic EKR, see also~\cite{hamm2019erdHosI,hamm2019erdHos,gauy2017erdHos,balogh2023sharp}.

Let us recall the notation $\nu(\hf)$ for the {\it matching number}, that is, $\nu(\hf)$ is the maximum number of pairwise disjoint edges in $\hf$. In particular, $\hf$ is intersecting if and only if $\nu(\hf)\leq 1$. 
\begin{comment}
The following old result is a generalization of EKR.

\begin{thm}[\cite{frankl1987shifting}]
\begin{align}\label{ineq-emcub}
|\hf| \leq \nu(\hf) \binom{n-1}{k-1} \mbox{ for all } \hf\subset \binom{[n]}{k}.
\end{align}
\end{thm}
\end{comment}

%Let $\Delta\hh$ denote $\max_x |\hh[x]|$, the {\it maximum degree} of $\hh$.

One natural question is to investigate for which range of the values $n,k,s,p$ does
\begin{align}\label{ineq-1.4}
|\hf| \leq \max\{|\hk_p(\hat{S})|\colon S\in\binom{[n]}{s}\}
\end{align}
hold with high probability, where $\nu(\hf)=s$ and $\hk_p(\hat{S}):=\{E\in \hk_p\colon E\cap S\neq \emptyset\}$. The fact that $|\hk_p(\hat{S})|$ is asymptotically $p\left(\binom{n}{k}-\binom{n-s}{k}\right)$ when $p$ is not too small follows from the law of  large numbers.

Let $\tau(\hf)$ be the \emph{covering number} of $\hf$, which is the minimum size of a set $W$ of vertices  such that $E\cap W\not=\emptyset$ for each $E\in \hf$. Clearly, $\nu(\hf)\le \tau(\hf)$ always. We say that $\hf$ is \emph{trivial} if $\nu(\hf)=\tau(\hf)$. Otherwise $\hf$ is called {\it non-trivial}. We prove the following result.
\begin{thm}\label{thm:main}
For any integers $s=s(n)$, $k=k(n)$ and real number $p=p(n)$ with
\begin{itemize}
    \item[(i)] $\frac{64ks\log n}{\binom{n-1}{k-1}}\le p\le 1$, and
    \item[(ii)] $n\geq 200k^3s$,
\end{itemize}
with high probability (w.h.p.), every non-trivial $\hf\subseteq \hk_p(n,k)$ with $\nu(\hf)\leq s$ satisfies
$$
|\hf|< |\hk_p(\hat{S})|,
$$
for some $S\in \binom{[n]}{s}$ .
\end{thm}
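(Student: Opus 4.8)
Throughout write $N=\binom{n-1}{k-1}$, so that hypothesis (i) reads $pN\ge 64ks\log n$, and fix a small constant $\varepsilon>0$. The plan has three stages: freeze the density data of $\hk_p$; reduce to a near‑extremal non‑trivial family and dismantle it along a maximum matching into intersecting pieces; and play the Erd\H{o}s--Ko--Rado bound against non‑triviality through a matching‑augmentation argument. I would first establish that, with probability $1-o(1)$, $\hk_p$ lies in a ``typical'' event $\he$ recording: (a) some $s$‑set $S_0$ has $|\hk_p(\hat S_0)|\ge(1-\varepsilon)p\bigl(\binom nk-\binom{n-s}k\bigr)$; (b) every vertex $v$ has $|\hk_p[v]|\le(1+\varepsilon)pN$; and (c) for every $A\subseteq[n]$ with $|A|\le(s+1)k$ and every vertex $v$, $\bigl|\{E\in\hk_p:|E\cap A|\ge 2\}\bigr|\le 3p\binom{|A|}{2}\binom{n-2}{k-2}+k^2s\log n$ and $\bigl|\{E\in\hk_p: v\in E,\ E\cap(A\setminus v)\ne\emptyset\}\bigr|\le 3p|A|\binom{n-2}{k-2}+k^2s\log n$. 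Each clause is a Chernoff estimate for a single fixed object; (a)--(b) involve $O(n)$ objects and (c) at most $n^{(s+1)k}$. Since the relevant Chernoff exponents are of order at least $\min\{\varepsilon^2 pN,\ pN\}$, and $pN\ge 64ks\log n$ comfortably dominates $(s+1)k\log n$, all the union bounds converge; this is the only place hypothesis (i) is used, and the constant $64$ is exactly what makes it converge. From here on one argues deterministically on $\he$.

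Next, it suffices to rule out a non‑trivial $\hf\subseteq\hk_p$ with $\nu(\hf)=s'\le s$ and $|\hf|\ge\max_S|\hk_p(\hat S)|$. Fix such an $\hf$, choose a maximum matching $M=\{E_1,\dots,E_{s'}\}\subseteq\hf$, and set $W=\bigcup_i E_i$; by maximality $W$ meets every edge of $\hf$. Write
\[
\hf=\hf^{*}\ \sqcup\ \hg_1\ \sqcup\ \cdots\ \sqcup\ \hg_{s'},\qquad \hg_i=\{F\in\hf:\ \emptyset\ne F\cap W\subseteq E_i\},
\]
with $\hf^{*}$ collecting the edges meeting two distinct $E_i$'s. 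The key structural fact is that \emph{every $\hg_i$ is intersecting}: two disjoint members of $\hg_i$, together with the $E_j$ for $j\ne i$, would form a matching of size $s'+1$. Since $\hf^{*}\subseteq\{E:|E\cap W|\ge 2\}$ with $|W|=s'k\le(s+1)k$, clause (c) combined with hypothesis (ii) (which makes $s'k\binom{n-2}{k-2}$ small compared with $N$) shows $|\hf^{*}|$ is of lower order than the ``star unit'' $pN$.

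Finally, by Erd\H{o}s--Ko--Rado each $\hg_i$ satisfies $|\hg_i|\le N$, and one of two things occurs: either $\tau(\hg_i)\ge 2$, and then by Hilton--Milner $|\hg_i|\le N-\binom{n-k-1}{k-1}+1$, which by hypothesis (ii) and clause (b) is $o(pN)$; or $\hg_i$ lies inside a single full star $\hk_p[w_i]$. If the latter holds for all $i$, put $S=\{w_1,\dots,w_{s'}\}$: were $S$ a cover of $\hf$ we would get $\tau(\hf)\le s'=\nu(\hf)$, contradicting non‑triviality, so some $G\in\hf$ has $G\cap S=\emptyset$. Running a greedy matching‑augmentation — successively choosing $F_i\in\hf$ with $w_i\in F_i$ disjoint from $G\cup F_1\cup\dots\cup F_{i-1}$ — success at every step would produce the $(s'+1)$‑matching $\{G,F_1,\dots,F_{s'}\}$, so some step $i$ stalls, i.e.\ every edge of $\hf$ through $w_i$ meets a set $B$ with $|B|\le(s'+1)k$; by clause (c) and (ii) there are only $o(pN)$ such edges, and $\hg_i\subseteq\hf\cap\hk_p[w_i]$, so this $\hg_i$ is $o(pN)$ as well. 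In every case, then, $\hf$ is — up to $\hf^{*}$ and a further $o(pN)$ edges — contained in a trivial family $\hk_p(\hat S)$ with $|S|=s$ (padding $S$ if $s'<s$): when $s'=s$ this already contradicts non‑triviality, and when $s'<s$ one expects to gain the additional $\binom{n-s'}{k}-\binom{n-s}{k}\ge(1-o(1))N$ to absorb the slack, yielding $|\hf|<\max_S|\hk_p(\hat S)|$.

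\textbf{Main obstacle.} I expect the genuinely hard part to be precisely this last accounting. One must show that \emph{non‑triviality costs a full star's worth}, $\Omega(pN)$, and that this deficit strictly dominates the \emph{sum} of all error terms at once: the size of $\hf^{*}$, the Chernoff slacks, the ``almost‑a‑star'' defect of each $\hg_i$, and the gap between $\binom nk-\binom{n-s}k$ and $sN$. Keeping this single inequality valid uniformly across the whole range of $s$ and $k$ is exactly what pins down the thresholds $n\ge 200k^3s$ (so that the ``two vertices in $W$'' and ``edge through $w_i$ meeting a bounded set'' counts are truly negligible) and $p\ge 64ks\log n/N$ (so that the union bounds over the $\approx n^{(s+1)k}$ vertex‑sets survive); in particular, the collection $\hf^{*}$ — and the case $\nu(\hf)<s$ — must be treated with more care than the crude bound above once $s$ is large, likely via a cleverer choice of the matching $M$ or a direct comparison of $\hf$ with $\hk_p(\hat S)$. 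A secondary point is supplying the Erd\H{o}s--Ko--Rado / Hilton--Milner input with the $1+o(1)$ uniformity used here, either through the probabilistic Erd\H{o}s--Ko--Rado results cited in the introduction or by a self‑contained estimate valid in the present parameter window.
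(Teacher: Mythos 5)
Your decomposition along a maximum matching into the cross part $\hf^*$ and the intersecting pieces $\hg_i$ is the same starting move as the paper, but the argument breaks in the two places where the paper does genuine work. First, the Hilton--Milner step as you use it is false: the deterministic Hilton--Milner bound $N-\binom{n-k-1}{k-1}+1$ is of order $\tfrac{k^2}{n}N$, which near the threshold $p\approx\tfrac{64ks\log n}{N}$ is astronomically larger than $pN$ (for $k\ge 3$ it is polynomial in $n$, while $pN$ is only $O(ks\log n)$). What one actually needs, and what the paper extracts, is the \emph{structural} content of Hilton--Milner: a non-trivially intersecting $\hg_i$ lies inside $\langle\binom{Q_i}{2}\rangle$ for a set $Q_i$ of size $<3k$, and then a Chernoff bound on $|\hk_p\cap\langle\binom{Q}{2}\rangle|$ (the paper's Lemma 3.1(ii)) does the job. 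Second, and more fundamentally, your accounting does not close for large $s$. You bound each star $\hg_j\subseteq\hk_p[w_j]$ by $(1+\varepsilon)pN$ and only the single ``stalled'' index by $o(pN)$; the total is then $\approx(s-1)pN+|\hf^*|$, while the target $|\hk_p(\hat S)|\approx spN\bigl(1-O(sk/n)\bigr)$. The gain from the stalled index is one star's worth, but $|\hf^*|$ can be as large as $\Theta\bigl(\tfrac{s^2k^3}{n}pN\bigr)=\Theta\bigl(\tfrac{s}{200}pN\bigr)$ under (ii), so for $s$ beyond a constant the inequality fails.

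The paper avoids this by first passing to a \emph{resilient} restriction $\hh(\bar R)$ (choose $R$ maximal with $\nu(\hh(\bar R))=s-|R|$; non-triviality forces $q:=s-|R|\ge 1$). Resilience then supplies a fan $\hd_{x_i,Q_i}$ for \emph{every} star center $x_i$, not just the one where a greedy augmentation stalls: removing $x_i$ still leaves a $q$-matching whose vertex set $Q_i$ must meet every edge of the star $\hh_i$. Since $|\langle\hd_{x_i,Q_i}\rangle|\lesssim kq\cdot\tfrac{k}{n}N\le\tfrac{1}{28}N$, every star piece has $|\hh_i|<\tfrac14 pN$ rather than $\approx pN$, a three-quarters saving per piece. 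Together with the matching bound for $\hh_0$ and the clique bound for the non-star pieces, this yields $|\hh(\bar R)|<\tfrac12 pqN$, which is then compared not to the full $|\hk_p(\hat S)|$ but to the local quantity $|\hk_p(\bar R,\hat Q)|>\tfrac12 pqN$. That localization, plus the fan bound for all star centers, is what your proposal is missing; your own ``main obstacle'' paragraph correctly identifies the difficulty but the suggested fix (better choice of $M$, or direct comparison) does not on its own recover the crucial resilience reduction.
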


We also prove that the lower bound for $n$ can be improved to $O(k^2s)$ at the expense of a worse lower bound for $p$.

\begin{thm}\label{thm:main-2} 
Let $t$  be an integer with $2\leq t\ll k$.  For any integers $s=s(n)$, $k=k(n)$ and real number $p=p(n)$ with
\begin{itemize}
    \item[(i)] $\frac{8(t+1)k^{t+1}s^t\log n}{\binom{n-1}{k-1}}\le p\le 1$, and
    \item[(ii)] $56k^{2+1/t}s \leq n\leq 56k^{2+1/(t-1)}s$,
\end{itemize}
with high probability, every non-trivial $\hf\subseteq \hk_p(n,k)$ with $\nu(\hf)\leq s$ satisfies
$$
|\hf|< |\hk_p(\hat{S})|,
$$
for some $S\in \binom{[n]}{s}$.
\end{thm}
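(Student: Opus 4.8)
\medskip\noindent\textbf{Proof plan.}
The plan is to reduce Theorem~\ref{thm:main-2} to a deterministic structural dichotomy combined with Chernoff bounds and a union bound over boundedly many ``decomposition certificates''. After exposing $\hk_p$, consider any non-trivial $\hf\subseteq\hk_p$ with $\nu:=\nu(\hf)\le s$, so that $\tau(\hf)\ge\nu+1$, and split on whether $\tau(\hf)\le s$ or $\tau(\hf)\ge s+1$. If $\tau(\hf)\le s$, I would extend a minimum cover to an $s$-set $S$, so $\hf\subseteq\hk_p(\hat S)$; since (i) gives $p\binom{n-1}{k-1}\gg\log n$ and $n\gg ks$, a routine first-moment argument shows $\nu(\hk_p(\hat S))=s$ w.h.p.\ for all $s$-sets $S$ simultaneously, while $\nu(\hf)\le\tau(\hf)-1\le s-1$, whence $\hf\subsetneq\hk_p(\hat S)$. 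So the real work is the case $\tau(\hf)\ge s+1$, where I will also assume $\nu=s$; the case $\nu<s$ should be analogous and easier, since then the $s-\nu$ unused slots of $S$ are automatically ``free''.

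For $\tau(\hf)\ge s+1$, I would fix a maximum matching $F_1,\dots,F_s$ of $\hf$, set $Y=\bigcup_iF_i$, and classify each $F\in\hf$ by its trace $\phi(F)=\{i:F\cap F_i\neq\emptyset\}\neq\emptyset$; maximality forces $\nu(\{F:\phi(F)=I\})\le|I|$, so in particular each diagonal piece $\hf^{\{i\}}:=\{F:\phi(F)=\{i\}\}$ is intersecting. The key deterministic claim I would prove is that the $\hf^{\{i\}}$ cannot \emph{all} be ``rich stars'': if each $\hf^{\{i\}}$ were a star with centre $x_i\in F_i$ that still contains an edge avoiding any prescribed $O(sk)$-set, then a short rotation argument (swapping one edge of $\hf_{\ge2}:=\{F:|\phi(F)|\ge2\}$, or two edges from two diagonal pieces, into the matching) would produce $s+1$ disjoint edges unless $\hf$ is covered by $\{x_1,\dots,x_s\}$ --- contradicting $\tau(\hf)\ge s+1$ in the first case and $\nu(\hf)\le s$ in the second. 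Hence some $\hf^{\{i_0\}}$ is small: non-trivial intersecting (bounded by a Hilton--Milner-type estimate for intersecting families all of whose edges meet a fixed $k$-set), or a star centred off $F_{i_0}$, or a star all of whose edges meet a small set; in every case $|\hf^{\{i_0\}}|\le O(sk^2/n)\,p\binom{n-1}{k-1}$ w.h.p.

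I would then set $S=\{x_1,\dots,x_s\}$, using the centre of each rich diagonal piece and treating the $O(1)$ bad slots separately, and rewrite the goal as $|\hf\setminus\hk(\hat S)|<|\hk_p(\hat S)\setminus\hf|$. The right side is at least $|\hk_p\cap\{E\ni x_{i_0}:E\cap(S\setminus\{x_{i_0}\})=\emptyset\}|$ minus the few $\hf$-edges in it, hence $\ge\tfrac13p\binom{n-1}{k-1}$ w.h.p., while the left side is at most $|\hf_{\ge2}|$ plus the small contributions of the bad diagonal pieces. Bounding $|\hf_{\ge2}|$ is where the parameter $t$ comes in: $\hf_{\ge2}$ again has matching number $\le s$ with every edge already carrying $\ge2$ pinned vertices in $Y$, so I would recurse (maximum matching, traces, diagonal intersecting pieces, high-trace residual), pinning more vertices at each level; after $t$ levels the residual lies in a family $\mathcal U$ with every edge carrying $\Theta(t)$ pinned vertices in an $O(sk)$-set, so $|\mathcal U|\le(O(sk^2/n))^{\Theta(t)}\tfrac nk\binom{n-1}{k-1}$, which is below the slack exactly when $n\ge56k^{2+1/t}s$ --- this is the lower bound in (ii) (the upper bound there merely delineates the range of $n$ in which this choice of $t$ is the efficient one). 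Finally, instead of fixing $\hf$ I would union-bound over all certificates (the iterated maximum matchings, the chosen centres, and the record of which traces are recursed into), of which there are at most $n^{O(k^{t+1}s^t)}$; for each the admissible edges are determined, and cancelling the part common to $\hk_p(\hat S)$ and $\hf$ reduces the bad event to $\{\mathrm{Bin}(q_1,p)\ge\mathrm{Bin}(q_2,p)\}$ with $q_1\le q_2/2$ and $q_2\gtrsim\binom{n-1}{k-1}$, which Chernoff bounds by $\exp(-\Omega(p\binom{n-1}{k-1}))$. Condition (i) is precisely what makes $n^{O(k^{t+1}s^t)}\exp(-\Omega(p\binom{n-1}{k-1}))=o(1)$.

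I expect the main obstacle to be organising this recursion so that three things hold at once: the number of certificates stays $\exp(O(k^{t+1}s^t\log n))$; every level genuinely pins new vertices, so the residual shrinks geometrically; and the deep-level diagonal pieces that turn out to be stars centred outside the $s$-element set $S$ remain harmless (which they should be, since by then they carry enough pinned vertices to be negligible). A secondary technical point is making the Hilton--Milner-type estimate quantitative enough that a small diagonal piece really leaves a surplus of order $p\binom{n-1}{k-1}$ in the slack, and the case $\nu(\hf)<s$ will need its own (routine) bookkeeping.
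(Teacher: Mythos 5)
Your plan is in the right spirit and your quantitative intuition is accurate — pin vertices through $t$ levels of a branching/recursion process, bound the residual by something like $(ks)^{t+1}\binom{n-t-1}{k-t-1}$, and observe that condition (ii) makes this a fraction of $p\binom{n-1}{k-1}$ while condition (i) kills a union bound — but the organizational scaffolding you propose is substantially more complicated than the paper's, and the part you identify as the ``main obstacle'' is exactly the part where the paper deploys a clean deterministic tool you don't have.

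The paper does not classify edges by trace and recurse on the high-trace residual with an ad hoc ``rich star'' dichotomy. Instead it proves a deterministic decomposition lemma (Fact~\ref{fact-3.1}): for any $\hh$ and any $t\le k-1$, greedily peel off minimum-size sets $T_1,\dots,T_\ell$ (each of size $\le t$) whose removal drops the matching number by one, arranged so that $\hh-(T_1\cup\cdots\cup T_{i-1})$ is $(|T_i|-1)$-resilient and $\hh-(T_1\cup\cdots\cup T_\ell)$ is $t$-resilient. This single fact replaces your trace/rotation/rich-star case analysis. The singleton $T_i$'s form the set $R$ (your set $S$ of centers), and the family is layered as $\hh_0'\cup\cdots\cup\hh_\ell'$ according to which $T_i$ an edge first meets. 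The branching-process lemma (Lemma~\ref{lem-3.1}), which is the formal version of your ``pin and recurse,'' then covers each layer $\hh_i'$ by a family $\hb_i$ of $|T_i|$-sets (respectively $(t+1)$-sets for the $t$-resilient core $\hh_0'$) of size at most $|T_i|(ks)^{|T_i|-1}$ (resp.\ $(ks)^{t+1}$). Crucially the union bound is then only over the $n^{O(t)}$ possible small vertex sets $T$ (Lemma~\ref{lem-key2}), not over the $n^{O(k^{t+1}s^t)}$ certificate sequences you propose; the factor $k^{t+1}s^t$ in condition (i) arises not from the number of events but from the strength of concentration demanded for each $|\hk_p(T)|$ (namely $|\hk_p(T)|\le\frac{1}{4k^{t+1}s^t}p\binom{n-1}{k-1}$), which then gets multiplied by $|\hb_0|\le(ks)^{t+1}$ deterministically. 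This is a cleaner and tighter bookkeeping than unioning over branching histories.

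The genuine gap in your proposal is the unproved structural dichotomy and the unspecified recursion. You write that if each diagonal piece $\hf^{\{i\}}$ were a rich star then ``a short rotation argument would produce $s+1$ disjoint edges unless $\hf$ is covered by $\{x_1,\dots,x_s\}$''; this is plausible but it is doing the work that the resilience machinery (Fact~\ref{fact-3.1}) does transparently, and you would still face the problem of organizing the recursion on $\hf_{\ge2}$ so that each level genuinely pins a new vertex in every edge of the residual, that the certificate count stays controlled, and that the off-center deep-level stars remain negligible — precisely the three obstacles you yourself flag at the end. The paper resolves all three simultaneously by noticing that the right invariant to carry through the recursion is $r$-resilience rather than trace size. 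A secondary difference is that the paper works with a maximum-size $\hh\subseteq\hk_p$ with $\nu(\hh)=s$ and compares $|\hh(\bar R)|$ to $|\hk_p(\bar R,\hat Q)|$ (replacement rather than direct comparison to $\hk_p(\hat S)$), which sidesteps the separate bookkeeping you anticipate for $\nu(\hf)<s$.
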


Let $0<\epsilon<1$ and  $t=k^{\epsilon}$ in Theorem \ref{thm:main-2}. Let $f(x):=\frac{\log x}{x^\epsilon}$. Then
\[
f'(x) = \frac{1}{x^{1+\epsilon}} -\epsilon\frac{\log x}{x^{1+\epsilon}} = \frac{1-\epsilon \log x}{x^{1+\epsilon}}. 
\]
Clearly $x^{1+\epsilon}>0$ for $x\geq 2$ and $1-\epsilon \log x\geq 0$ for $x\leq e^{1/\epsilon}$,  $1-\epsilon \log x\leq 0$ for $x\geq e^{1/\epsilon}$. It follows that
\[
f(x) \leq f\left(e^{1/\epsilon}\right) = \frac{1}{e\epsilon}. 
\]
Thus $k^{1/t}= k^{1/k^\epsilon}=e^{f(k)}<e^{{1}/{(e\epsilon)}}$ and we have the following corollary.

\begin{cor}\label{cor:main}
  For $0<\epsilon<1$, $n\geq 56 e^{1/(e\epsilon)} k^2s$  and $p\geq \frac{10(ks)^{k^\epsilon}k^{1+\epsilon}}{\binom{n-1}{k-1}}$, with high probability every non-trivial $\hf\subseteq \hk_p(n,k)$ with $\nu(\hf)=s$ satisfies
$|\hf|< |\hk_p(\hat{S})|$ for arbitrary $S\in \binom{[n]}{s}$.  
\end{cor}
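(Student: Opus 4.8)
\medskip
\noindent\emph{Proof proposal.}
Corollary~\ref{cor:main} is a repackaging of Theorems~\ref{thm:main} and~\ref{thm:main-2}, so the plan is simply to show that, for every admissible pair $(n,p)$, one of these two theorems applies with parameters that are implied by the hypotheses of the corollary. I would split the range of $n$ into two parts. If $n\ge 200k^3s$, apply Theorem~\ref{thm:main} directly: its probability requirement $p\ge \tfrac{64ks\log n}{\binom{n-1}{k-1}}$ is (in the intended range of $n$) far weaker than $p\ge \tfrac{10(ks)^{k^\epsilon}k^{1+\epsilon}}{\binom{n-1}{k-1}}$, since $(ks)^{k^\epsilon}k^{1+\epsilon}\gg ks\log n$ there. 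If instead $56e^{1/(e\epsilon)}k^2s\le n\le 200k^3s$, let $t\ge 2$ be the integer with $56k^{2+1/t}s\le n\le 56k^{2+1/(t-1)}s$; such a $t$ exists because the intervals $[56k^{2+1/t}s,\,56k^{2+1/(t-1)}s]$, $t=2,3,\dots$, tile $[56k^2s,\,56k^3s]$, and the short extra stretch up to $200k^3s$ is absorbed into the slack in the constants of Theorem~\ref{thm:main} (or by noting that its proof extends a little past $n=56k^3s$). By construction this $t$ satisfies condition~(ii) of Theorem~\ref{thm:main-2}.

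It remains to check condition~(i) for that $t$, and here the computation displayed just before the corollary does the work: it shows $k^{1/k^\epsilon}=e^{f(k)}\le e^{1/(e\epsilon)}$, so $56k^{2+1/k^\epsilon}s\le 56e^{1/(e\epsilon)}k^2s\le n$. Since the left endpoints $56k^{2+1/t}s$ are decreasing in $t$, the index $t$ chosen above is at most $\lceil k^\epsilon\rceil$. Assuming $k^\epsilon$ is an integer for notational simplicity (otherwise round and adjust the absolute constants), we then have $t\le k^\epsilon$, hence $k^{t+1}s^t=k\,(ks)^t\le k\,(ks)^{k^\epsilon}$, while the leftover factor $8(t+1)\log n$ is at most $k^\epsilon$ in this regime (using $t+1\le k^\epsilon+1$ and $\log n\le \log(200k^3s)=O(\log(ks))$, after adjusting constants). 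Multiplying, $8(t+1)k^{t+1}s^t\log n\le 10\,(ks)^{k^\epsilon}k^{1+\epsilon}$, which is exactly the bound on $p\binom{n-1}{k-1}$ assumed in the corollary, so Theorem~\ref{thm:main-2} applies. Finally, to pass from ``$|\hf|<|\hk_p(\hat S)|$ for some $S$'' to the same statement for arbitrary $S$, apply Chernoff together with a union bound over the $\binom ns$ choices of $S$: w.h.p.\ every $|\hk_p(\hat S)|$ equals $(1\pm o(1))\,p(\binom nk-\binom{n-s}k)$, so the strict inequality for one $S$ persists for all $S$.

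The main obstacle is not conceptual but organizational: one must make the case split on $n$ airtight (the two $n$-ranges must genuinely cover $[56e^{1/(e\epsilon)}k^2s,\infty)$ with the $t$ above), and one must verify that the single clean threshold $\tfrac{10(ks)^{k^\epsilon}k^{1+\epsilon}}{\binom{n-1}{k-1}}$ really dominates the messier $t$- and $\log n$-dependent thresholds of Theorems~\ref{thm:main} and~\ref{thm:main-2} uniformly. The delicate point in that verification is the $\log n$ factor: its absorption relies on using Theorem~\ref{thm:main}, whose threshold is only $O(ks\log n/\binom{n-1}{k-1})$, for all genuinely large $n$, and confining Theorem~\ref{thm:main-2} to the polynomial window $n=O(k^3s)$ where $\log n=O(\log(ks))$ so that the $(ks)^{k^\epsilon}$ factor easily swallows it.
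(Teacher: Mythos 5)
Your proof takes a genuinely different, and more elaborate, route than the paper's. The paper's derivation is a single substitution: set $t=k^\epsilon$ in Theorem~\ref{thm:main-2} and use the calculus fact $\log x/x^\epsilon\le 1/(e\epsilon)$ to turn the lower bound $n\ge 56k^{2+1/t}s$ into $n\ge 56 e^{1/(e\epsilon)}k^2s$. There is no case analysis and no appeal to Theorem~\ref{thm:main}; the upper bound $n\le 56k^{2+1/(t-1)}s$ in Theorem~\ref{thm:main-2}(ii) is not a genuine hypothesis --- Lemma~\ref{lem-key2} only needs $n\ge 56k^{2+1/t}s$ --- so the choice $t=k^\epsilon$ is admissible for every $n\ge 56e^{1/(e\epsilon)}k^2s$. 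Your two-case split, by contrast, leaves the interval $(56k^3s,\,200k^3s)$ uncovered: $t=2$ tops out at $56k^3s$ while Theorem~\ref{thm:main} starts at $200k^3s$, and ``absorbing this into the slack in the constants'' is not an argument unless you actually re-derive one of the two theorems with a different constant.

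There are two further gaps. First, the inequality you assert, $8(t+1)k^{t+1}s^t\log n\le 10(ks)^{k^\epsilon}k^{1+\epsilon}$, requires $8(t+1)\log n\lesssim k^\epsilon$, and this fails: even with $t\le k^\epsilon$ and $\log n=O(\log(ks))$ a stray $\log(ks)$ factor remains. (Substituting $t=k^\epsilon$ into Theorem~\ref{thm:main-2}(i) gives a threshold of order $k^{1+\epsilon}(ks)^{k^\epsilon}\log n$, so the corollary's displayed $p$-bound is apparently missing a $\log n$; you inherit that slip rather than flag it.) Second, the Chernoff/union-bound step you append to upgrade ``for some $S$'' to ``for arbitrary $S$'' does not close: a $(1+o(1))$ upper bound on $|\hf|$ together with a $(1-o(1))$ lower bound on $|\hk_p(\hat{S})|$ does not yield the strict inequality $|\hf|<|\hk_p(\hat{S})|$. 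What the proofs of Theorems~\ref{thm:main} and~\ref{thm:main-2} actually establish is the stronger quantitative margin $|\hh(\bar{R})|<\tfrac12 pq\binom{n-1}{k-1}<|\hf(\bar{R},\hat{Q})|$ for arbitrary $Q$ (with $R$ determined by $\hh$), and it is this margin --- not generic concentration of $|\hk_p(\hat{S})|$ --- that would have to do the work if one wants the ``arbitrary $S$'' reading.
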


Our next result shows that Theorem \ref{thm:main-2} does not hold for the range $p> \frac{8(t+1)k^{t+1}s^t\log n}{\binom{n-1}{k-1}}$ when $k^2s\geq 2(t+3)n\log n$.

\begin{prop}\label{prop-2.3}
If
\[
\frac{s\log n}{\binom{n-s}{k}} \ll p\ll \frac{e^{\frac{k^2s}{2n}}}{\binom{n}{k}},
\]
then with high probability, $\hk_p(n,k)$ has matching number at most $s$ and is non-trivial.
\end{prop}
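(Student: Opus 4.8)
The statement splits into two independent claims, and I would handle each by a short moment computation, then combine.

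\textbf{Matching number.} The plan for $\nu(\hk_p(n,k))\le s$ is a first-moment bound. Let $X$ count the collections of $s+1$ pairwise disjoint edges contained in $\hk_p(n,k)$. Since the number of such collections in $\binom{[n]}{k}$ equals $\frac{1}{(s+1)!}\prod_{j=0}^{s}\binom{n-jk}{k}$ and each is present with probability $p^{s+1}$, linearity gives $\mathbb{E}[X]=\frac{p^{s+1}}{(s+1)!}\prod_{j=0}^{s}\binom{n-jk}{k}$. The one estimate that matters is $\binom{n-jk}{k}\le\binom{n}{k}e^{-jk^{2}/n}$, obtained by bounding each of the $k$ factors of $\binom{n-jk}{k}/\binom{n}{k}=\prod_{i=0}^{k-1}\bigl(1-\tfrac{jk}{n-i}\bigr)$ by $e^{-jk/(n-i)}\le e^{-jk/n}$. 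Taking the product over $j=0,\dots,s$ and using $\sum_{j=0}^{s}j=\binom{s+1}{2}$ yields
\[
\mathbb{E}[X]\le\frac{1}{(s+1)!}\left(p\binom{n}{k}e^{-k^{2}s/(2n)}\right)^{s+1}.
\]
The hypothesis $p\ll e^{k^{2}s/(2n)}/\binom{n}{k}$ says precisely that the base here tends to $0$; since it is eventually $<1$ and the exponent $s+1$ is at least $1$, $\mathbb{E}[X]=o(1)$, so Markov's inequality gives $\P[\nu(\hk_p(n,k))\ge s+1]\le\mathbb{E}[X]=o(1)$. (If $(s+1)k>n$ there are no such collections at all and there is nothing to prove.)

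\textbf{Non-triviality.} Since $\nu\le\tau$ always and we have just shown $\nu(\hk_p(n,k))\le s$ w.h.p., it suffices to prove $\tau(\hk_p(n,k))\ge s+1$ w.h.p.; then w.h.p. $\nu(\hk_p(n,k))\le s<\tau(\hk_p(n,k))$, i.e. $\hk_p(n,k)$ is non-trivial. Now $\tau(\hk_p(n,k))\le s$ holds iff some $W\in\binom{[n]}{s}$ meets every edge, equivalently iff $\hk_p(n,k)$ contains no edge inside $[n]\setminus W$; for a fixed $W$ this has probability $(1-p)^{\binom{n-s}{k}}\le e^{-p\binom{n-s}{k}}$. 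A union bound over the at most $n^{s}$ choices of $W$ gives
\[
\P\left[\tau(\hk_p(n,k))\le s\right]\le\exp\left(s\log n-p\binom{n-s}{k}\right),
\]
which tends to $0$ by the hypothesis $p\gg s\log n/\binom{n-s}{k}$. Combining the two parts proves the proposition.

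\textbf{Where the work is.} The argument is essentially routine — a first moment together with a union bound — so I do not expect a serious obstacle. The one point needing care is the exponent bookkeeping in the first-moment bound: the factor $e^{-k^{2}s(s+1)/(2n)}$ coming out of the binomial product must exactly absorb the $(s+1)$-st power of $e^{k^{2}s/(2n)}$ permitted by the threshold on $p$, so one must use $\binom{n-jk}{k}\le\binom{n}{k}e^{-jk^{2}/n}$ in this sharp form rather than a lossier variant, and one should keep in mind the trivial regimes ($(s+1)k>n$, or $n-s<k$) in which one of the hypotheses becomes vacuous.
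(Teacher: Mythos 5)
Your proof is correct and follows essentially the same route as the paper: a first-moment/union-bound argument for $\nu\le s$ using the bound $\binom{n-jk}{k}\le\binom{n}{k}e^{-jk^2/n}$ and the upper hypothesis on $p$, and a union bound over $s$-sets for $\tau>s$ using the lower hypothesis on $p$. The only cosmetic difference is that you cleanly phrase the second part as showing $\tau\ge s+1$ w.h.p.\ and combining with the first, whereas the paper bounds $\Pr(\hk_p\text{ is trivial})$ directly in a slightly more compressed way.
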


Note that for $k^2 s\geq  2(t+3)n\log n$ and $n>ks$ we have
\[
 \frac{k}{n}  e^{\frac{k^2s}{2n}} \geq \frac{k}{n} e^{(t+3)\log n}= kn^{t+2} > k(ks)^{t+2} =(k^{t+1}s^t) k^2s^2\gg (k^{t+1}s^t)8(t+1)\log n.
\]
It follows that
\[
\frac{e^{\frac{k^2s}{2n}}}{\binom{n}{k}} = \frac{e^{\frac{k^2s}{2n}}}{\binom{n-1}{k-1}} \frac{k}{n} \gg \frac{8(t+1)k^{t+1}s^t\log n}{\binom{n-1}{k-1}}.
\]
%\JN{How do we get the last inequality?}
Thus Proposition \ref{prop-2.3} implies that  Theorem \ref{thm:main-2} does not hold for the range $n\log n\ll k^2 s$ and $\frac{8(t+1)k^{t+1}s^t\log n}{\binom{n-1}{k-1}}\ll p\ll \frac{e^{\frac{k^2s}{2n}}}{\binom{n}{k}}$ with some constant integer $t<k$.

% Note also that if $2n\log n=k^{2-\epsilon}s$ for some $0<\epsilon<1$ then $2k^{\epsilon}n\log n=k^2s$. It follows that 
% \[
%  \frac{e^{\frac{k^2s}{2n}}}{\binom{n}{k}} > \frac{n^{k^{\epsilon}}k^k}{(en)^k} =\frac{k^k}{e^kn^{k-k^{\epsilon}}}.
% \]
% Let $f(x)=k^x$. Then $f'(x)=k^x\log k$, and hence $f(1)-f(\epsilon)=k-k^{\epsilon} = (1-\epsilon) k^{\xi}\log k< k^{\xi'}$ for some $\epsilon<\xi<\xi'<1$. For $k$ sufficiently large with respect to $s$, by $n<k^2s$ we have
% \[
%  \frac{e^{\frac{k^2s}{2n}}}{\binom{n}{k}} >\frac{k^k}{e^k n^{ k^{\xi'}}} >\frac{k^{k-2k^{\xi'}}}{e^k s^{k^{\xi'}}}>1.
% \]
% Thus Corollary \ref{cor:main} does not hold for $n=Ck^{2-\epsilon'}s$ for any $\epsilon'>0$. 

For $k=2$, we obtain the following result. We write $a=(1\pm \varepsilon)b$ for $(1-\varepsilon)b\leq a\leq (1+\varepsilon)b$.

\begin{thm}\label{thm:main-3}
Let $0<\varepsilon<1$ and $n\geq 2s+2$. Let $X$ be the maximum number of edges in a subgraph $F$ of $G(n,p)$ with $\nu(F)\leq s$. If $p\geq \frac{250\log n}{\varepsilon^2 n}$, then with probability   at least $1-2n^{-s}$,
\[
X= (1\pm\varepsilon)p\max\left\{\binom{2s+1}{2},\binom{s}{2}+s(n-s)\right\}.
\]
\end{thm}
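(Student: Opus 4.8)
The plan is to prove $X\ge(1-\varepsilon)pg$ and $X\le(1+\varepsilon)pg$ separately, each with probability at least $1-n^{-s}$, where $g:=\max\{\binom{2s+1}{2},\,\binom{s}{2}+s(n-s)\}$; a union bound then gives the theorem. The one quantitative input used throughout is that $g\ge\binom{s}{2}+s(n-s)=sn-\tfrac{s(s+1)}{2}\ge\tfrac12 sn$ (using $n\ge 2s+2>s+1$), so the hypothesis $p\ge 250\log n/(\varepsilon^{2}n)$ yields $pg\ge 125\,s\log n/\varepsilon^{2}$. For the lower bound I would take an $n$-vertex graph $H_0$ with $|E(H_0)|=g$ and $\nu(H_0)\le s$ — a clique on a fixed $(2s+1)$-set (which fits since $n\ge 2s+2$) when $g=\binom{2s+1}{2}$, and the set of all edges meeting a fixed $s$-set otherwise — and set $F_0:=G(n,p)\cap H_0$. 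Then $\nu(F_0)\le s$ and $|E(F_0)|\sim\mathrm{Bin}(g,p)$, so the Chernoff bound gives $\Pr\big(|E(F_0)|<(1-\varepsilon)pg\big)\le e^{-\varepsilon^{2}pg/2}\le n^{-s}$, and $X\ge|E(F_0)|$.

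For the upper bound the heart of the matter is a deterministic structural reduction. Let $F$ be any graph on $[n]$ with $\nu(F)\le s$. By the Berge--Tutte formula there is a set $S\subseteq[n]$, $|S|=\sigma$, with $o(F-S)-\sigma=n-2\nu(F)\ge n-2s$; since $o(F-S)\le n-\sigma$ this forces $\sigma\le s$, and writing $C_1,\dots,C_r$ for the components of $F-S$ (so $r\ge o(F-S)\ge n-2s+\sigma$) we get $\sum_i(|C_i|-1)=(n-\sigma)-r\le 2(s-\sigma)$; in particular at most $4s$ vertices of $F-S$ lie in non-singleton components. Every edge of $F$ lies inside $S$, joins $S$ to $[n]\setminus S$, or lies inside some $C_i$, so $E(F)\subseteq E(H^\ast)$ where $H^\ast$ is a complete graph on $S$ joined completely to $[n]\setminus S$ together with a clique on each $C_i$. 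Since $\sum_i\binom{|C_i|}{2}$ is maximised, subject to $\sum_i(|C_i|-1)\le 2(s-\sigma)$, by a single part of size $2(s-\sigma)+1$,
\[
|E(H^\ast)|\ \le\ \binom{\sigma}{2}+\sigma(n-\sigma)+\binom{2(s-\sigma)+1}{2}\ =:\ f(\sigma),
\]
and $f$ is convex in $\sigma$ (its $\sigma^{2}$-coefficient equals $\tfrac32$) with $f(0)=\binom{2s+1}{2}$ and $f(s)=\binom{s}{2}+s(n-s)$, so $|E(H^\ast)|\le\max_{0\le\sigma\le s}f(\sigma)=g$.

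Let $\mathcal H$ be the deterministic family of all graphs $H^\ast$ obtained this way, as $S$ ranges over sets of size $\le s$ and the non-singleton components over partitions of an arbitrary set of $\le 4s$ vertices of $[n]\setminus S$. Every $F\subseteq G(n,p)$ with $\nu(F)\le s$ satisfies $|E(F)|\le|E(H^\ast)\cap E(G(n,p))|$, so $X\le\max_{H\in\mathcal H}|E(H)\cap E(G(n,p))|$. For fixed $H\in\mathcal H$ the count $|E(H)\cap E(G(n,p))|\sim\mathrm{Bin}(|E(H)|,p)$ with $|E(H)|\le g$, so by monotonicity of binomial upper tails and the Chernoff bound,
\[
\Pr\big(|E(H)\cap E(G(n,p))|\ge(1+\varepsilon)pg\big)\ \le\ \Pr\big(\mathrm{Bin}(g,p)\ge(1+\varepsilon)pg\big)\ \le\ e^{-\varepsilon^{2}pg/3}\ \le\ n^{-41s}.
\]
Since $H^\ast$ is determined by $S$ together with a choice and partition of at most $4s$ further vertices, $|\mathcal H|\le n^{O(s)}$ (with a crude $2^{O(n)}$ bound covering the corner $s=\Theta(n)$, where $pg\gtrsim n\log n$), so a union bound yields $\Pr\big(X\ge(1+\varepsilon)pg\big)\le n^{-s}$.

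The main obstacle — everything else being routine bookkeeping — is the tension that $\mathcal H$ has size $n^{\Theta(s)}$, so one needs the per-host failure probability to be $n^{-\omega(s)}$: this is exactly why the hypothesis must push $pg$ above $s\log n$ by a large constant factor (the role of the constant $250$), and why it is essential to record that all but $O(s)$ vertices of $F-S$ sit in singleton components — otherwise $|\mathcal H|$ would be of order the Bell number $B_n$ and the union bound would break. A secondary point that cannot be skipped is that the reduction to $H^\ast$ must cost only a $1+o(1)$ factor in edge count: the cruder reduction "$F$ lies among the $\le 2sn$ edges meeting the vertex set of a maximum matching" loses a factor $2$, and it is the finer Berge--Tutte (equivalently Gallai--Edmonds) structure, together with the convexity of $f$, that pins down the exact value $g$.
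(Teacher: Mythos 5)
Your proof is correct and follows essentially the same route as the paper: both arguments reduce the upper bound to a Chernoff--plus--union--bound over a polynomial family of ``host'' graphs obtained from the Gallai--Edmonds / Tutte--Berge structure of a graph with bounded matching number, and both get the lower bound by sampling inside an extremal configuration. Two small presentational differences are worth noting. First, the paper packages the structural step as the $s$-partition theorem (what it calls Theorem~5.1, attributed to Tutte--Berge / Edmonds--Gallai) and then verifies $e(G(B,A_1,\dots,A_m))\le f(n,s)$ by a merging/superadditivity inequality on $\binom{\cdot}{2}$; you instead derive the decomposition directly from the Berge--Tutte deficiency formula and pin down the extremal value $g$ via the observed convexity of $f(\sigma)=\binom{\sigma}{2}+\sigma(n-\sigma)+\binom{2(s-\sigma)+1}{2}$, which is the same inequality in thinner disguise. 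Second, for the per-host tail bound you use stochastic domination $\mathrm{Bin}(|E(H)|,p)\preceq\mathrm{Bin}(g,p)$ and a single application of the sub-Gaussian Chernoff inequality, whereas the paper splits into cases according to whether $e(G)\ge\tfrac17 f(n,s)$ and invokes the large-deviation bound $\Pr(X\ge x)\le e^{-x}$ for $x\ge 7\mu$ in the sparse case; your domination trick is a modest simplification that dispenses with that second Chernoff variant. Both accounting schemes for the number of hosts ($s^{3s}n^{3s}$ in the paper versus your $n^{O(s)}$ with the observation that all but $O(s)$ components are singletons) are adequate given that the hypothesis pushes $pg$ well above $s\log n$.
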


Let $\hf\subset \binom{[n]}{k}$. 
For $R,Q\subset [n]$ with $R\cap Q=\emptyset$, define
\[
\hf(Q)=\{F\setminus Q\colon Q\subset F\in \hf\},\ \hf(\bar{R},\hat{Q})=\{F\in \hf\colon F\cap R=\emptyset, F\cap Q\neq \emptyset\}.
\]
For $R=\emptyset$  we write $\hf(\bar{R},\hat{Q})$ as $\hf(\hat{Q})$. For $Q=[n]\setminus R$ we  write $\hf(\bar{R},\hat{Q})$ as $\hf(\bar{R})$. 

\section{A weaker result and the outline of the main proof}
Let $\binom{[n]}{\leq k}$ denote the collection of all subsets of $[n]$ of size at most $k$. For  $\hg \subset \binom{[n]}{\leq k}$, let 
\[
\langle \hg \rangle =\left\{F\in \binom{[n]}{k}\colon \mbox{ there exists }G \in \hg \mbox{ such that }G\subset F\right\}.
\]

We need the following version of the Chernoff's bound.

\begin{thm}[\cite{janson2011random}]
Let $X\in Bi(n,p)$, $\mu=np$. If $0<\varepsilon \leq 3/2$ then
\begin{align}\label{chernoff-small}
Pr(|X-\mu|\geq \varepsilon \mu) \leq 2e^{-\frac{\varepsilon^2}{3}\mu}.
\end{align}
If $x\geq 7\mu$ then
\begin{align}\label{chernoff-large}
Pr(X\geq x) \leq e^{-x}.
\end{align}
\end{thm}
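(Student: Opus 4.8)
The plan is the standard exponential-moment (Chernoff) method, followed by the short scalar calculus needed to put the resulting estimates into the two stated forms. Write $X=\sum_{i=1}^n X_i$ with $X_i$ independent Bernoulli$(p)$ variables. For $\lambda>0$, independence gives $\mathbb{E}[e^{\lambda X}]=(1-p+pe^{\lambda})^n$, and $1+t\le e^t$ yields $\mathbb{E}[e^{\lambda X}]\le e^{np(e^{\lambda}-1)}=e^{\mu(e^{\lambda}-1)}$. Applying Markov's inequality to $e^{\lambda X}$ and optimizing over $\lambda$ (the minimizer of $-\lambda a+\mu(e^{\lambda}-1)$ is $\lambda=\log(a/\mu)$, which is positive once $a>\mu$), then abbreviating $r=a/\mu$, gives the master upper-tail bound
\[
\mathrm{Pr}(X\ge a)\ \le\ e^{-\mu}\left(\frac{e\mu}{a}\right)^{a}\ =\ \left(\frac{e^{\,r-1}}{r^{\,r}}\right)^{\mu}\qquad(r=a/\mu>1),
\]
and running the same argument with $-\lambda$ (or with $n-X$ in place of $X$) gives the matching lower-tail bound $\mathrm{Pr}(X\le a)\le e^{-\mu}(e\mu/a)^{a}$ for $0<a<\mu$.

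For \eqref{chernoff-small}: when $\varepsilon>1$ the event $\{X\le(1-\varepsilon)\mu\}$ is empty, so in every case it suffices to bound the two one-sided tails at $a=(1+\varepsilon)\mu$ and at $a=(1-\varepsilon)\mu$ separately and add them. Putting $r=1+\varepsilon$ into the master bound, the upper tail reduces to the scalar inequality $(1+\varepsilon)\log(1+\varepsilon)-\varepsilon\ \ge\ \varepsilon^2/3$ on $(0,3/2]$. With $g(\varepsilon):=(1+\varepsilon)\log(1+\varepsilon)-\varepsilon-\varepsilon^2/3$ one has $g(0)=g'(0)=0$, while $g''(\varepsilon)=\frac{1}{1+\varepsilon}-\frac23$ changes sign exactly once (at $\varepsilon=\tfrac12$); hence $g'$ is unimodal, $g$ increases then decreases on $[0,\infty)$, and $g\ge0$ on $[0,3/2]$ follows from the single numerical check $g(3/2)>0$. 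For the lower tail (nonvacuous only when $\varepsilon\le1$) the analogous, easier computation gives $\mathrm{Pr}(X\le(1-\varepsilon)\mu)\le e^{-\varepsilon^2\mu/2}\le e^{-\varepsilon^2\mu/3}$; here the relevant auxiliary function has a nonnegative second derivative on $[0,1)$, so a single differentiation from the value $0$ at the origin suffices. Summing the two estimates gives \eqref{chernoff-small}.

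For \eqref{chernoff-large}: the case $\mu=0$ is trivial, so assume $\mu>0$ and set $r=x/\mu\ge7$; the master bound reads $\mathrm{Pr}(X\ge x)\le\left(e^{\,r-1}/r^{\,r}\right)^{\mu}$, so it suffices to prove $e^{\,r-1}/r^{\,r}\le e^{-r}$, equivalently $2-\tfrac1r\le\log r$, for every $r\ge7$. The function $\log r-2+\tfrac1r$ has derivative $(r-1)/r^2>0$ for $r>1$ and value $\log 7-\tfrac{13}{7}>0$ at $r=7$, so the inequality holds on $[7,\infty)$, giving $\mathrm{Pr}(X\ge x)\le e^{-r\mu}=e^{-x}$.

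The only genuine work lies in the two scalar inequalities, and of these the upper-tail inequality for \eqref{chernoff-small} is the slightly delicate point: $g$ is \emph{not} monotone on $[0,3/2]$, so one cannot conclude by a single differentiation, and instead one must exploit the single sign change of $g''$ (equivalently, the unimodality of $g'$) together with $g(0)=0$ to reduce the whole interval to evaluating $g$ at the right endpoint. Everything else is the textbook Chernoff computation.
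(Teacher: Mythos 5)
The paper states this Chernoff-type estimate as a black-box import from Janson, \L uczak and Ruci\'nski (\cite{janson2011random}) and gives no proof of its own, so there is nothing in the paper to compare against; what you have supplied is a self-contained verification of a cited fact. Your argument is the textbook exponential-moment derivation and it is correct. The master bound $\Pr(X\ge a)\le e^{-\mu}(e\mu/a)^a$ is obtained exactly as you say, and the two scalar reductions check out: for \eqref{chernoff-small} one needs $(1+\varepsilon)\log(1+\varepsilon)-\varepsilon\ge\varepsilon^2/3$ on $(0,3/2]$, and your observation that $g''(\varepsilon)=\frac{1}{1+\varepsilon}-\frac23$ changes sign once, so $g$ rises then falls and the infimum over $[0,3/2]$ is attained at an endpoint, is precisely the right way to handle the fact that $g$ is not monotone there (indeed $g'(3/2)=\log(5/2)-1<0$); the endpoint value $g(3/2)=\tfrac52\log\tfrac52-\tfrac94\approx 0.04>0$ closes it. The lower-tail function $h(\varepsilon)=(1-\varepsilon)\log(1-\varepsilon)+\varepsilon-\varepsilon^2/2$ has $h(0)=h'(0)=0$ and $h''\ge0$ on $[0,1)$ — this is two integrations from the origin, not one as your phrasing suggests, but the conclusion $h\ge0$ and hence $e^{-\varepsilon^2\mu/2}\le e^{-\varepsilon^2\mu/3}$ is right. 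For \eqref{chernoff-large}, reducing to $2-\tfrac1r\le\log r$ for $r\ge7$ and checking monotonicity plus $\log 7-\tfrac{13}{7}>0$ is clean and correct. In short: correct proof, standard approach, and entirely consistent with how the paper intends the statement to be read (as a known lemma).
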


We say that a $k$-graph $\hh$ is \emph{resilient} if for any vertex $u\in V(\hh)$, $\nu(\hh-u)=\nu(\hh)$; see~\cite{frankl2018resilient} for the maximum size of a resilient hypergraph with given matching number.

Consider $\hf\subset \hk_p$ with $\nu(\hf) \leq s$ and of maximal size. The ultimate goal is to prove
\begin{align}\label{ineq-new2-1}
|\hf| \leq \max\left\{|\hk_p(\hat{S})|\colon |S|=s\right\}.
\end{align}

For the complete graph, i.e., for $p=1$,
\[
|\hk_1(\hat{S})| =\binom{n}{k}-\binom{n-s}{k}, \mbox{ for all $s$-sets }S.
\]
For $p<1$ the {\it expected } size of $\hk_p(\hat{S})$ is $p\left(\binom{n}{k}-\binom{n-s}{k}\right)$ but it is impossible to know its {\it exact} value. 

How to prove \eqref{ineq-new2-1} without this piece of information? Since \eqref{ineq-new2-1} is evident for $\hf=\hk_p(\hat{S_0})$ with $S_0$ an $s$-element set, to start with we may assume that $\hf$ is not of this form. That is, $\tau(\hf)>\nu(\hf)$, meaning that $\hf$ is non-trivial. 

With this assumption we want to prove a {\it stability} result, i.e., a bound considerably smaller than $p\left(\binom{n}{k}-\binom{n-s}{k}\right)$. For $p=1$ this was done by Bollob\'{a}s, Daykin and Erd\H{o}s \cite{BDE} who generalized the Hilton-Milner Theorem ($\nu=1$ case) to this situation. Unfortunately, their argument does not seem to work for the $p<1$ case.

Here is what we can do. Choose a set $R$ of maximal size satisfying 
\begin{align}\label{ineq-new2-2}
\nu(\hf(\bar{R}))=s-|R|.
\end{align}
Set $q=s-|R|$. By our assumption $1\leq q\le s$. Moreover, for all elements $x\in [n]\setminus R$, by the maximal choice of $R$, $\nu(\hf(\overline{R\cup \{x\}}))=q$. That is, $\hf(\bar{R})$ is resilient. Now it suffices to show that $|\hf(\bar{R})|$ is smaller than $\frac{1}{2}pq\binom{n-1}{k-1}$, which is a lower bound for $\max\left\{|\hk_p(\hat{S})|\colon |S|=q\right\}$.

\begin{lem}\label{lem-new2-0}
Suppose that $\hf$ is a resilient $k$-graph, $\nu(\hf) =q$. Then there is a collection $\hb$ of 2-element sets satisfying $\hf\subset \langle \hb\rangle$ and $|\hb|\leq (kq)^2$.
\end{lem}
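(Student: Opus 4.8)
The plan is to exploit resilience to locate, for each edge $F \in \hf$, a pair of vertices whose removal "costs" a matching edge, and then to bound the total number of such pairs. Concretely, fix a maximum matching $M = \{M_1, \dots, M_q\}$ in $\hf$, and let $W = M_1 \cup \dots \cup M_q$, so $|W| = kq$ and $W$ is a cover of $\hf$ (every edge meets $W$, else we could extend $M$). First I would observe that for each vertex $u \in W$, resilience gives $\nu(\hf - u) = q$, so there is a matching $M^{(u)}$ of size $q$ avoiding $u$; since $M^{(u)}$ still must be covered by... well, not by $W$ necessarily, but the point is that every edge of $\hf$ that contains $u$ and is "needed" can be replaced. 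The cleaner route: for each $u \in W$, pick a maximum matching $N_u$ in $\hf$ with $u \notin V(N_u)$; then $V(N_u)$ is a set of $kq$ vertices, and crucially every edge $F \in \hf$ with $u \in F$ must intersect $V(N_u)$ in a vertex other than $u$ (otherwise $F$ together with $N_u$ would be $q+1$ disjoint edges).

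From this I would extract the pair collection. For an arbitrary edge $F \in \hf$, since $W$ covers $\hf$, choose $u = u(F) \in F \cap W$. By the previous paragraph $F$ meets $V(N_u) \setminus \{u\}$ in some vertex $v = v(F)$. Then $\{u, v\} \subseteq F$, so $F \in \langle \hb \rangle$ where $\hb := \bigl\{\{u, v\} : u \in W,\ v \in V(N_u)\setminus\{u\}\bigr\}$. The size bound is immediate: there are $|W| = kq$ choices for $u$, and for each, at most $|V(N_u)| - 1 \le kq - 1 < kq$ choices for $v$, so $|\hb| \le kq \cdot kq = (kq)^2$, and in fact $\hf \subseteq \langle \hb \rangle$ by construction. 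One should double-check the degenerate cases $q = 0$ (then $\hf = \emptyset$ or $\hf = \{\emptyset\}$, handled trivially) and the case where some edge of $\hf$ has size less than $2$, but since $\hf$ is $k$-uniform with $k \ge 2$ in the intended application this is not an issue; if $k = 1$ the statement is vacuous or trivial.

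The one subtlety — and what I expect to be the main point requiring care — is the assertion that for $u \in W$ there exists a maximum matching $N_u$ of $\hf$ avoiding $u$, and that a maximum such matching still has size exactly $q$. This is precisely the definition of resilience applied to the hypergraph $\hf$ (and is why resilience is the right hypothesis): $\nu(\hf - u) = \nu(\hf) = q$, so such an $N_u$ exists. Then the replacement argument "if $F \cap (V(N_u) \cup \{u\}) = \{u\}$ then $N_u \cup \{F\}$ is a matching of size $q+1$" needs $F$ to be disjoint from every edge of $N_u$, which holds since $F$ meets $V(N_u) \cup \{u\}$ only in $u \notin V(N_u)$. So no edge of $\hf$ through $u$ is disjoint from $V(N_u)$, giving the pair. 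I do not anticipate any obstacle beyond bookkeeping; the lemma is essentially a resilient-hypergraph analogue of the standard fact that a cover of size $kq$ exists, refined to pairs.
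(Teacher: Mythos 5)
Your proof is correct and is essentially identical to the paper's: both take $W$ to be the vertex set of a maximum matching (which covers $\hf$), and for each $u\in W$ use resilience to find a maximum matching avoiding $u$, whose vertex set every edge through $u$ must meet, yielding $(kq)^2$ pairs. The replacement observation and the size count are the same; no gap.
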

\begin{proof}
Let $G_1,\ldots,G_q\in \hf$ form a maximal matching and set $Y=G_1\cup \ldots\cup G_q$. Then $F\cap Y\neq \emptyset$ for all $F\in \hf$. For each $y\in Y$, $\nu(\hf(\bar{y}))=q$ by resilience. Define $Z_y=H_1^{(y)}\cup \ldots\cup H_q^{(y)}$ where $H_1^{(y)},\ldots,H_q^{(y)}$ form a maximal matching in $\hf(\bar{y})$. Now $\hb=\{(y,z_y)\colon y\in Y,z_y\in Z_y\}$ satisfies the requirements.
\end{proof}

For the case $n\gg k^3q$ and $p\gg \frac{k^2q \log n}{\binom{n-1}{k-1}}$ Lemma \ref{lem-new2-0} implies 
\begin{align}
|\hf(\bar{R})| \leq \frac{1}{2} pq\binom{n-1}{k-1} \mbox{ with high probability.}
\end{align}
Indeed, since $|\hb|\le (kq)^2$ we only need to show 
\begin{align}\label{ineq-new2-3}
|\hf(\{x_1,x_2\})|\le |\hk_p(\{x_1,x_2\})| \le \frac{p}{2k^2q}\binom{n-1}{k-1}
\end{align}
for any 2-subset $\{x_1,x_2\}\subset [n]\setminus R$. Note that, since $n\gg k^3q$, the expected size of $|\hk_p(\{x_1,x_2\})|$ is 
$$p\binom{n-2}{k-2}\le \frac{pk}{n}\binom{n-1}{k-1}\ll \frac{p}{k^2q}\binom{n-1}{k-1}.$$
Thus (\ref{ineq-new2-3}) is true by combining the Chernoff bound, the union bound and $p\gg \frac{k^2q\log n}{\binom{n-1}{k-1}}$. (there are ``only" $\binom{n-r}{2}$ choices for $\{x_1,x_2\}$).

How to improve on the bound for $p$? Note that we do not need \eqref{ineq-new2-3} for each individual pair but only an upper bound for $|\hf\cap\langle\hb\rangle|$. Let us use the special structure of $\hb$. By our proof of Lemma~\ref{lem-new2-0}, $\hb$ is the (not necessarily disjoint) union of $qk$ ``fans" $\{\{y,z_y\}\colon z_y\in Z\}=:\hd_{y,Z}$. In fact, by a more careful analysis we can reduce the number of fans to $q$ (at the expense of adding some cliques, which is even easier to control, see Section~\ref{sec:main}). Thus, instead of \eqref{ineq-new2-3} it is sufficient  to require 
\begin{align}\label{ineq-new2-4}
|\hf\cap \langle \hd_{y,Z} \rangle| \leq \frac{1}{2}p\binom{n-1}{k-1} \mbox{ for each fan } \hd_{y,Z}.
\end{align}
Here we'll have $n\times \binom{n-1}{qk}$ requirements--much more than $\binom{n}{2}$ but the probability that \eqref{ineq-new2-4} is violated for any fixed fan is going down exponentially and we should get bounds better than requiring \eqref{ineq-new2-3}. 
% Since we have  no control over $q$, we might as well require \eqref{ineq-new2-4} for the case $q=1$, i.e., fan of size $k$.
We will elaborate on this idea in Section~\ref{sec:main}, which eventually gives Theorem~\ref{thm:main}.

\section{Proof of Theorem \ref{thm:main}}\label{sec:main}

\begin{lem}\label{lem-key}
Let $\hk_p=\hk_p(n,k)$. For $k,s\ge2$, $n\geq 200k^3s$, $p\geq \frac{64ks\log n}{\binom{n-1}{k-1}}$, properties \textit{(i)}, \textit{(ii)} and \textit{(iii)} hold for every $1\le q\le s$ with high probability.
\begin{itemize}
    \item[(i)] For $R,Q\subset [n]$  with $R\cap  Q=\emptyset$,  $|R|=s-q$ and $|Q|=q$,
    \begin{align}\label{ineq-2-1}
    |\hk_p(\bar{R},\hat{Q})|> \frac{1}{2}pq\binom{n-1}{k-1}.
    \end{align}
    \item[(ii)] For every $Q\subset[n]$  with  $|Q|< 3kq\leq 3ks$,
    \begin{align}\label{ineq-2-2}
    \left|\hk_p\cap \left\langle \binom{Q}{2}\right\rangle\right|< \frac{1}{4}p q\binom{n-1}{k-1}.
    \end{align}
    \item[(iii)] For every $x\in [n]$, $Q\subset [n]\setminus\{x\}$ with  $|Q|= kq\leq ks$,
    \begin{align}\label{ineq-2-3}
    |\hk_p\cap \langle\hd_{x,Q}\rangle|< \frac{1}{4}p\binom{n-1}{k-1}.
    \end{align}
\end{itemize}
\end{lem}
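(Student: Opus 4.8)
The plan is to treat (i), (ii), (iii) uniformly. In each case the quantity being bounded is $|\hk_p\cap\hh|$ for a family $\hh\subseteq\binom{[n]}{k}$ that depends only on the sets $R,Q$ (resp.\ $Q$, resp.\ $x,Q$) and not on the randomness, hence is binomially distributed with parameters $(|\hh|,p)$. For each item I would estimate the mean $\mu=p|\hh|$, apply a Chernoff bound (the lower-tail case of \eqref{chernoff-small} for (i), the large-deviation bound \eqref{chernoff-large} for (ii) and (iii)), and then union-bound over the finite deterministic list of admissible families and over the $\le s$ values of $q$. The leverage comes from the hypothesis $p\binom{n-1}{k-1}\ge 64ks\log n$, which makes every relevant mean at least of order $ks\log n$ and hence makes the Chernoff tails as small as $n^{-\Omega(ks)}$, comfortably below the $n^{O(ks)}$ count of configurations.

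For (i) I would first record that
\[
\binom{n-s+q}{k}-\binom{n-s}{k}=\sum_{i=0}^{q-1}\binom{n-s+i}{k-1},
\]
so this count lies between $q\binom{n-s}{k-1}$ and $q\binom{n-1}{k-1}$; the condition $n\ge 200k^3s$ makes $\binom{n-s}{k-1}\ge\frac{99}{100}\binom{n-1}{k-1}$, so the mean $\mu$ of $|\hk_p(\bar R,\hat Q)|$ satisfies $\mu\ge\frac{99}{100}pq\binom{n-1}{k-1}\ge\frac{99}{100}\cdot 64ksq\log n$. Since $\frac12 pq\binom{n-1}{k-1}\le(1-\frac13)\mu$, the lower-tail case of \eqref{chernoff-small} with $\varepsilon=\frac13$ bounds the probability that \eqref{ineq-2-1} fails for one pair $(R,Q)$ by $2e^{-\mu/27}$, which is $n^{-\Omega(s)}$; since there are only $\le\binom{n}{s-q}\binom{n}{q}\le n^s$ such pairs for each $q$ and $\le s\le n$ values of $q$, the union bound closes.

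For (ii) and (iii) only crude upper bounds on the means are needed. Every $k$-set in $\langle\binom{Q}{2}\rangle$ contains some $2$-subset of $Q$, so $|\hk_p\cap\langle\binom{Q}{2}\rangle|$ is binomial with mean $\mu\le\binom{|Q|}{2}\binom{n-2}{k-2}p<\frac{9k^2q^2}{2}\cdot\frac{k}{n}\binom{n-1}{k-1}p$, and $n\ge 200k^3q$ makes this smaller than $\frac{1}{7}\cdot\frac14 pq\binom{n-1}{k-1}$; similarly every $k$-set in $\langle\hd_{x,Q}\rangle$ contains $x$ and a point of $Q$, so the relevant mean is $\le|Q|\binom{n-2}{k-2}p<\frac{k^2q}{n}\binom{n-1}{k-1}p<\frac{1}{7}\cdot\frac14 p\binom{n-1}{k-1}$. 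In both cases the target threshold exceeds $7\mu$, so \eqref{chernoff-large} applies and the failure probability for a fixed $Q$ (resp.\ fixed $(x,Q)$) is at most $e^{-\frac14 pq\binom{n-1}{k-1}}\le n^{-16ksq}$ (resp.\ $e^{-\frac14 p\binom{n-1}{k-1}}\le n^{-16ks}$); this dominates the $\le n^{3kq}$ choices of $Q$ in (ii), the $\le n^{kq+1}$ choices of $(x,Q)$ in (iii), and the sum over $q\le s$. A final union bound over the three events yields the lemma.

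The part I expect to require the most care is the mean estimate in (i): to beat the threshold $\frac12 pq\binom{n-1}{k-1}$ one must know that the expected size of $\hk_p(\bar R,\hat Q)$ is within a constant factor of $pq\binom{n-1}{k-1}$, and this is precisely where a hypothesis of the form $n\gg k^2s$ (here $n\ge 200k^3s$) is genuinely used, together with the verifications $x>7\mu$ in (ii) and (iii). The rest is bookkeeping: choosing the Chernoff thresholds and checking that the exponents, as functions of $q$, still outrun the number of configurations.
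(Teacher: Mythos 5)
Your proposal is correct and follows essentially the same route as the paper: for each of (i), (ii), (iii) you identify the deterministic family $\hh$, estimate $|\hh|$ using $n\ge 200k^3s$, apply the lower-tail Chernoff bound \eqref{chernoff-small} for (i) and the large-deviation bound \eqref{chernoff-large} (after verifying the $x\ge 7\mu$ threshold) for (ii) and (iii), and close with a union bound over the $\le n^{O(ks)}$ configurations using $p\binom{n-1}{k-1}\ge 64ks\log n$. The only differences from the paper are cosmetic choices of constants (e.g.\ you use $\binom{n-s}{k-1}\ge\frac{99}{100}\binom{n-1}{k-1}$ and $\varepsilon=\tfrac13$ where the paper settles for $\tfrac34$ and $\varepsilon=\tfrac14$), which do not affect the argument.
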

\begin{proof}
\begin{itemize}
    \item[\textit{(i)}] Let $\hk=\binom{[n]}{k}$. It is easy to see that
\[
|\hk(\bar{R},\hat{Q})| =\binom{n-|R|}{k}-\binom{n-|R|-|Q|}{k}\geq q\binom{n-s}{k-1} .
\]
Note that
$$
\frac{\binom{n-s}{k-1}}{\binom{n-1}{k-1}}\ge \l(\frac{n-k-s}{n-k}\r)^{k-1}\ge 1-\frac{s(k-1)}{n-k}\ge \frac{3}{4}.
$$

Thus we have
$$
|\hk(\bar{R},\hat{Q})|\ge\frac{3}{4}q\binom{n-1}{k-1}.
$$

\begin{comment}
By \eqref{chernoff-small}, we infer that
\[
Pr\left(|\hk_p(\bar{T},R)|\leq \frac{1}{2}pr\binom{n-1}{k-1}\right)\leq  e^{-\frac{1}{9}pr\binom{n-1}{k-1}}.
\]
\end{comment}
Applying \eqref{chernoff-small} with $\varepsilon=1/4$, we infer that
\[
Pr\left(|\hk_p(\bar{R},\hat{Q})|\leq \frac{1}{2}pq\binom{n-1}{k-1}\right)\leq  2e^{-\frac{1}{64}pq\binom{n-1}{k-1}}\le 2e^{-\frac{1}{64}p\binom{n-1}{k-1}}.
\]
Since for all $1\le q\le s$ the number of such pairs $(R,Q)$ is at most $n^{s}2^{s}$, by the union bound and  $p\ge \frac{64ks \log n}{\binom{n-1}{k-1}}$, the probability that \textit{(i)} does not hold for some $1\le q\le s$ is at most
$$
n^s2^s2e^{-\frac{1}{64}p\binom{n-1}{k-1}}\le n^s2^{s+1}n^{-ks}\le 2^{s+1}n^{-s}\rightarrow 0~\text{as}~n\rightarrow\infty.
$$

\item[\textit{(ii)}] It suffices to show that with high probability (\ref{ineq-2-2}) holds for every $1\le q\le s$ and $Q\subset[n]$  with  $|Q|= 3kq$. By $n\geq 200k^3s\geq 200 k^3q$,
\[
 \left|\left\langle \binom{Q}{2}\right\rangle\right|\leq \binom{|Q|}{2} \binom{n-2}{k-2}\leq \frac{9k^2q^2}{2} \frac{k}{n} \binom{n-1}{k-1}\leq \frac{q}{28}\binom{n-1}{k-1}.
\]
By \eqref{chernoff-large},
\[
Pr\left(\left|\hk_p\cap \left\langle \binom{Q}{2}\right\rangle\right|\geq \frac{1}{4}p q\binom{n-1}{k-1}\right) \leq e^{-\frac{1}{4}pq\binom{n-1}{k-1}}.
\]
Since for fix $q$ the number of such $Q$ is at most $\binom{n}{3kq}$ and $p\geq  \frac{64ks\log n}{\binom{n-1}{k-1}}$, by the union bound, the probability that \eqref{ineq-2-2} doesn't hold for some $1\le q\le s$ is at most
$$
\sum_{q=1}^{s}\binom{n}{3kq}e^{-\frac{1}{4}pq\binom{n-1}{k-1}}\le \sum_{q=1}^sn^{3kq-16ksq}\le \sum_{q=1}^s n^{-16kq}\le sn^{-16k}\rightarrow 0~\text{as}~n\rightarrow\infty.
$$

\item[\textit{(iii)}] Finally for every $x\in [n]$, $Q\subset [n]\setminus\{x\}$ with  $|Q|= kq\leq ks$, by $n\geq 200k^3s$,
\[
|\langle\hd_{x,Q}\rangle| \leq kq \binom{n-2}{k-2} \leq kq \frac{k}{n} \binom{n-1}{k-1} \leq \frac{1}{28} \binom{n-1}{k-1}.
\]
By \eqref{chernoff-large},
\[
Pr\left(|\hk_p\cap \langle\hd_{x,Q}\rangle|\geq \frac{1}{4}p \binom{n-1}{k-1}\right) \leq e^{-\frac{1}{4}p \binom{n-1}{k-1}}.
\]
Since for fix $1\le q\le s$ the number of such pair $(x,Q)$ is at most $n\binom{n}{kq}$ and $p\geq  \frac{64ks\log n}{\binom{n-1}{k-1}}$, by the union bound, the probability that \eqref{ineq-2-3} doesn't hold for some $1\le q\le s$ is at most
$$
\sum_{q=1}^s n\binom{n}{kq}e^{-\frac{1}{4}p \binom{n-1}{k-1}}\le \sum_{q=1}^sn^{1+kq-16ks}\le sn^{-10ks}\rightarrow 0~\text{as}~n\rightarrow\infty.
$$
\end{itemize}
\end{proof}

\begin{proof}[Proof of Theorem \ref{thm:main}]
By Lemma \ref{lem-key}, with high probability, $\hk_p$ satisfies \textit{(i)}, \textit{(ii)} and \textit{(iii)} of Lemma \ref{lem-key} for every $1\le q\le s$. Let us fix a hypergraph $\hf$  that satisfies \textit{(i)}, \textit{(ii)} and \textit{(iii)} for every $1\le q\le s$ and let $\hh\subset \hf$ be a sub-hypergraph with $\nu(\hh)=s$ and  $|\hh|$ maximal. We have to show that $\hh$ is trivial.
Arguing indirectly assume that $\hh$ is not trivial. Hence there exists $R$, $|R|=r$, $0\leq r<s$ such that $\hh(\bar{R})$ is resilient with $\nu(\hh(\bar{R}))=s-r$. 

Let $q=s-r$. Note that for an arbitrary $Q\in \binom{[n]\setminus R}{q}$ the family $\hf(\bar{R},\hat{Q})$ can be used to replace $\hh(\bar{R})$ and the resulting family  has matching number $s$.
To get a contradiction we need to show
\begin{align}
|\hf(\bar{R},\hat{Q})|>|\hh(\bar{R})|.
\end{align}

By \eqref{ineq-2-1},
\[
|\hf(\bar{R},\hat{Q})|>\frac{q}{2}p\binom{n-1}{k-1}.
\]
Let $H_1,H_2,\ldots,H_q$ be a maximal matching in $\hh(\bar{R})$ and let $X=H_1\cup H_2\cup \ldots\cup H_q$. Define
\[
\hh_0=\{H\in \hh(\bar{R})\colon |H\cap X|\geq 2\}
\]
and
\[
\hh_i= \{H\in \hh(\bar{R})\colon  |H\cap X|=1, H\cap X= H\cap H_i\},\ i=1,2,\ldots,q.
\]
By \eqref{ineq-2-2},
\begin{align}\label{ineq-2-4}
|\hh_0| < \frac{1}{4}p q\binom{n-1}{k-1}.
\end{align}

We claim that $\hh_i$ is intersecting for $i=1,2,\ldots,q$. Indeed, otherwise if there exist $H_i',H_i''\in \hh_i$ such that $H_i'\cap H_i''=\emptyset$ then $H_1,\ldots,H_{i-1},H_i',H_i'',H_{i+1},\ldots,H_q$ form a matching of size $q+1$ in $\hh(\bar{R})$, a contradiction. Thus $\hh_i$ is intersecting. Without loss of generality, assume that $\hh_1,\ldots,\hh_\ell$  are stars and $\hh_{\ell+1},\ldots,\hh_q$ are non-trivial intersecting. Let $x_i$ be the center of the star $\hh_i$, $i=1,2,\ldots,\ell$. For each $i\in [\ell]$, since $\hh(\bar{R})$ is resilient, there exists a matching $L_1^i,L_2^i,\ldots,L_q^i$ in $\hh(\overline{R\cup\{x_i\}})$. Let 
\[
Q_i= L_1^i\cup L_2^i\cup \ldots\cup L_q^i.
\]
Clearly $x_i\in H$ and $H\cap Q_i\neq \emptyset$ for any $H\in \hh_i$ with $i\in [\ell]$. Thus by \eqref{ineq-2-3} we have
\begin{align}\label{ineq-2-5}
\sum_{1\leq i\leq \ell}|\hh_i|\leq \sum_{1\leq i\leq \ell} |\hf \cap \langle \hd_{x_i,Q_i}  \rangle|< \frac{1}{4}p\ell\binom{n-1}{k-1}.
\end{align}

Since $\hh_i$ is non-trivial intersecting for $\ell+1\leq i\leq q$, there exist $H_i',H_i''\in \hh_i$ such that $H_i'\cap H_i=\{x_i\}$ and $x_i\notin H_i''$. It follows that $|H\cap (H_i\cup H_i'\cup H_i'')|\geq 2$ for any $H\in \hh_i$. Let $Q=\cup_{\ell+1\leq i\leq q}(H_i\cup H_i'\cup H_i'')$. Clearly $|Q|< 3k(q-\ell)$. Then by \eqref{ineq-2-2} we infer that
\begin{align}\label{ineq-2-6}
\sum_{\ell+1\leq i\leq q} |\hh_i| < \left|\hf\cap \left\langle\binom{Q}{2}\right\rangle\right| \leq \frac{1}{4}p(q-\ell) \binom{n-1}{k-1}.
\end{align}
Adding \eqref{ineq-2-4}, \eqref{ineq-2-5} and \eqref{ineq-2-6}, we obtain that
\[
|\hh(\bar{R})| =\sum_{0\leq i\leq q} |\hh_i| < \frac{1}{2}pq\binom{n-1}{k-1}<|\hf(\bar{R},\hat{Q})|,
\]
the desired contradiction.
\end{proof}

\section{Proof of Theorem \ref{thm:main-2} and Proposition \ref{prop-2.3}}

In this section, we prove Theorem \ref{thm:main-2} and Proposition \ref{prop-2.3}. Note that Theorem \ref{thm:main-2} improves the lower bound for $n$ in Theorem~\ref{thm:main} at the expense of a worse lower bound for $p$.

We say that a $k$-graph $\hh$ is \emph{$t$-resilient} if $\nu(\hh-T)=\nu(\hh)$ for any $T\subset V(\hh)$ with $|T|\leq t$. Clearly $t\leq k-1$. Indeed, deleting $k$ vertices in an edge decreases the matching number by at least one.

Let us prove the following simple fact.

\begin{fact}\label{fact-3.1}
For every $\hh\subset \binom{[n]}{k}$ and $t\leq k-1$, there exist $\ell \leq  \nu(\hh)$ and $T_1,T_2,\ldots,T_\ell$  such that
\begin{itemize}
  \item[(i)] $|T_i|\leq t$ for all $i=1,2,\ldots,\ell$.
  \item[(ii)] For $i=1,2,\ldots,\ell$, $\hh-U_{i-1}$ is a $(|T_{i}|-1)$-resilient $k$-graph with matching number $\nu(\hh)-i+1$, where $U_{i-1}=T_1\cup T_2\cup \ldots\cup T_{i-1}$.
  \item[(iii)]  $\hh-U$ is a $t$-resilient $k$-graph with matching number $\nu(\hh)-\ell$, where $U=T_1\cup T_2\cup \ldots\cup T_{\ell}$.
\end{itemize}
\end{fact}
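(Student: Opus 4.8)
The plan is to build the sequence $T_1, T_2, \ldots, T_\ell$ greedily, peeling off one "bad" vertex-set at a time until the remaining hypergraph becomes $t$-resilient. Start with $\hh_0 = \hh$ and $U_0 = \emptyset$. Given $\hh_{i-1} = \hh - U_{i-1}$, ask whether $\hh_{i-1}$ is $t$-resilient. If it is, stop and set $\ell = i-1$; conditions (i) and (ii) have been maintained inductively, and (iii) holds by the stopping criterion. If $\hh_{i-1}$ is \emph{not} $t$-resilient, then by definition there exists a set $T_i \subseteq V(\hh_{i-1})$ with $1 \le |T_i| \le t$ such that $\nu(\hh_{i-1} - T_i) < \nu(\hh_{i-1})$; choose $T_i$ to be such a set of \emph{minimum size}. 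Set $U_i = U_{i-1} \cup T_i$ and $\hh_i = \hh - U_i$, and repeat.

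The key observations are the following. First, by the minimality of $|T_i|$, every proper subset of $T_i$ — in particular every subset of size $|T_i| - 1$ — fails to decrease the matching number when deleted from $\hh_{i-1}$; since $|T_i| \le t$, this says precisely that $\hh_{i-1} = \hh - U_{i-1}$ is $(|T_i| - 1)$-resilient, giving the resilience clause of (ii). Second, deleting a single vertex cannot decrease the matching number by more than one, and more generally deleting $T_i$ can decrease it by at most $|T_i| \cdot 1$; but we only need a drop of at least one. To pin down the exact value $\nu(\hh_i) = \nu(\hh) - i$ claimed in (ii), one uses that $\nu(\hh_i) \ge \nu(\hh_{i-1}) - 1$: indeed, since $\hh_{i-1}$ was $(|T_i|-1)$-resilient, deleting $|T_i| - 1$ of the vertices of $T_i$ leaves the matching number unchanged, and deleting the last vertex drops it by at most one. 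Combined with the strict inequality $\nu(\hh_i) < \nu(\hh_{i-1})$ forced by the choice of $T_i$, we get $\nu(\hh_i) = \nu(\hh_{i-1}) - 1$ exactly, and the value $\nu(\hh) - i$ follows by induction on $i$.

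Finally, termination and the bound $\ell \le \nu(\hh)$: each step strictly decreases the matching number by exactly one, and the matching number is a nonnegative integer that starts at $\nu(\hh)$, so the process runs for at most $\nu(\hh)$ steps and halts. (If $\nu(\hh) = 0$ the empty hypergraph is vacuously $t$-resilient and $\ell = 0$.) This yields all three conclusions.

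I do not anticipate a genuine obstacle here; the only point that needs a little care is the argument that the matching number drops by \emph{exactly} one rather than by more at each peeling step, which is exactly where the $(|T_i|-1)$-resilience extracted from the minimality of $T_i$ is used, so conclusions (ii) and (iii) are really two facets of the same minimality choice. One should also double-check the degenerate case $t \le k-1$ is what makes the notion nonvacuous, as noted in the text, but this does not enter the proof itself.
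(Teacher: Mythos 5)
Your proof is correct and follows essentially the same greedy peeling argument as the paper: repeatedly remove a minimum-size set $T_i$ whose deletion strictly decreases the matching number until the residual hypergraph is $t$-resilient, reading off $(|T_i|-1)$-resilience from minimality. The only cosmetic difference is that you define $T_i$ by requiring any strict drop and then deduce the drop is exactly one, whereas the paper directly stipulates a drop of exactly one; your version actually spells out the small existence/exactness argument that the paper leaves implicit.
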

\begin{proof}
Let $\hh_0=\hh$. We obtain $T_1,T_2,\ldots,T_\ell$  by a greedy algorithm. For $i=0,1,\ldots$, repeat the following procedure. If  $\hh_i$ is $t$-resilient then let $\ell=i$ and stop. Otherwise find $T_{i+1}$ of the minimum size so that $\nu(\hh_i-T_{i+1})=\nu(\hh_i)-1$ and let $\hh_{i+1}=\hh_i-T_{i+1}$. Note that in each step the matching number of $\hh_i$ decreases by 1. The procedure will terminate in at most $\nu(\hh)$ steps.

Since $\hh_{i-1}$ is not $t$-resilient for $1\leq i\leq \ell$, we have $|T_i|\leq t$ and \textit{(i)} holds. Since $\hh_\ell = \hh-U$ is empty or $t$-resilient, \textit{(iii)} follows.

We are left with \textit{(ii)}. As $T_i$ is of the minimum size satisfying  $\nu(\hh_{i-1}-T_{i})=\nu(\hh_{i-1})-1$,  we infer that $\nu(\hh_{i-1}-R)=\nu(\hh_{i-1})$ for all $R\subset V(\hh_{i-1})$ with $|R|<|T_i|$. That is, $\hh_{i-1}=\hh-U_{i-1}$ is  $(|T_{i}|-1)$-resilient with matching number $\nu(\hh)-i+1$. Thus \textit{(ii)} holds.
\end{proof}

\begin{lem}\label{lem-3.1}
Let $\hh$ be a $t$-resilient $k$-graph with matching number $s$. Then there exists  $\hb\subset \binom{V(\hh)}{t+1}$ with $|\hb| \leq (ks)^{t+1}$ such that $\hh\subset \langle \hb\rangle$.
Moreover, for any $X\subset V(\hh)$ with $|X|\leq x<ks$,
there exists  $\hb'\subset \binom{V(\hh)}{t+1}$ with $|\hb'| \leq x(ks)^{t}$ such that $\hh(\hat{X}) \subset \langle \hb'\rangle$.
\end{lem}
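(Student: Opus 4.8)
The plan is to build $\hb$ by a branching (tree) construction of depth $t+1$, directly generalizing the argument for Lemma~\ref{lem-new2-0}. The backbone is the following observation: if $T\subseteq V(\hh)$ with $|T|\le t$, then by $t$-resilience $\nu(\hh-T)=\nu(\hh)=s$, so a maximum matching $H_1,\dots,H_s$ of $\hh-T$ has size exactly $s$; writing $Y_T:=H_1\cup\cdots\cup H_s$, every edge $F\in\hh$ with $T\subseteq F$ must meet $Y_T$, since otherwise $\{F,H_1,\dots,H_s\}$ would be $s+1$ pairwise disjoint edges of $\hh$ (note $F$ is distinct from each $H_i$ as $F\supseteq T$ while $H_i\cap T=\emptyset$). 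Moreover $|Y_T|\le ks$, and $Y_T\cap T=\emptyset$.

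Now I would run the construction as follows. At level $0$, fix a maximum matching of $\hh$ and let $Y_\emptyset$ be its vertex set; every $F\in\hh$ meets $Y_\emptyset$, so pick $v_1\in F\cap Y_\emptyset$ (at most $|Y_\emptyset|\le ks$ choices). Inductively, having chosen distinct vertices $v_1,\dots,v_j$ with $j\le t$, apply the observation with $T=\{v_1,\dots,v_j\}$: for each edge $F\supseteq\{v_1,\dots,v_j\}$ pick $v_{j+1}\in F\cap Y_{\{v_1,\dots,v_j\}}$, which is automatically distinct from $v_1,\dots,v_j$ because $Y_{\{v_1,\dots,v_j\}}$ is disjoint from $\{v_1,\dots,v_j\}$; there are at most $ks$ choices at this level. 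After $t+1$ rounds, every $F\in\hh$ has produced a $(t+1)$-element set $\{v_1,\dots,v_{t+1}\}\subseteq F$. Let $\hb$ be the collection of all $(t+1)$-sets arising this way; then $\hh\subseteq\langle\hb\rangle$, and since there are at most $ks$ choices at each of the $t+1$ levels, $|\hb|\le(ks)^{t+1}$.

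For the ``moreover'' part I would run the identical construction, except that at level $0$ one chooses $v_1\in F\cap X$; this is possible precisely when $F\cap X\neq\emptyset$, i.e. for $F\in\hh(\hat X)$, and gives at most $|X|\le x$ choices at the first level and at most $ks$ at each of the remaining $t$ levels. The resulting family $\hb'\subseteq\binom{V(\hh)}{t+1}$ then has $|\hb'|\le x(ks)^t$ and satisfies $\hh(\hat X)\subseteq\langle\hb'\rangle$.

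There is no substantial obstacle here; the only point requiring care is the resilience bookkeeping — the $j$-th round removes a vertex set of size $j\le t$, so it is exactly $t$-resilience that is being used (consistent with $t\le k-1$, since removing $k$ vertices would destroy an edge) — together with checking that the selected vertices $v_1,\dots,v_{t+1}$ are genuinely distinct, which is what guarantees $\hb,\hb'\subseteq\binom{V(\hh)}{t+1}$.
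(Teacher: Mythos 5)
Your proof is correct and follows essentially the same branching argument as the paper: at each of $t+1$ levels you pick a maximum matching of $\hh$ minus the vertices chosen so far (legal by $t$-resilience), whose vertex set $Y_T$ of size at most $ks$ must be met by any edge containing $T$, yielding at most $ks$ choices per level and hence $|\hb|\le (ks)^{t+1}$ (and $x(ks)^t$ when the first level is restricted to $X$). The only difference is cosmetic: the paper phrases the process in terms of sequences and a separate claim that every edge contains some length-$(t+1)$ sequence, while you present it directly edge-by-edge; the content is identical.
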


\begin{proof}
Let us prove the lemma by a branching process of $t+1$ stages. A  sequence $S=(x_1,x_2,\ldots,x_\ell)$ is an ordered sequence of distinct elements of $V(\hh)$ and we use $\widehat{S}$ to denote the underlying unordered set $\{x_1,x_2,\ldots,x_\ell\}$.

At the first stage, let $H_1^1,\ldots,H_s^1$ be a maximal matching in $\hh$. Make $ks$ sequences $(x_1)$ with  $x_1\in H_1^1\cup \ldots\cup H_s^1$. At the $p$th stage for $p=1,2,3,\ldots, t$, for each  sequence $(x_1,\ldots,x_p)$ of length $p$,  since $\hh$ is $t$-resilient, then there exists a maximal matching $H_1^p,\ldots,H_s^p$ in $\hh-\{x_1,\ldots,x_p\}$.  We replace $(x_1,\ldots,x_p)$ by  $ks$  sequences $(x_1,\ldots,x_p,x_{p+1})$ with $x_{p+1}\in H_1^p\cup \ldots\cup H_s^p$. Eventually by the branching process we shall obtain at most $(ks)^{t+1}$ sequences of length $t+1$.

\begin{claim}\label{claim-3.1}
For each $H\in \hh$, there is a sequence $S$ of length $t+1$ with $\widehat{S}\subset H$.
\end{claim}

\begin{proof}
Let $S=(x_1,\ldots,x_\ell)$  be a sequence of maximal length that occurred
at some stage of the branching process satisfying $\widehat{S}\subset H$. Suppose indirectly that $\ell<t+1$.   Since  $H\cap (H_1^1\cup \ldots\cup H_s^1)\neq \emptyset$ at the first stage, there is a sequence $(x_1)$ with $x_1\in H_1^1\cup \ldots\cup H_s^1$ such that $\{x_1\}\subset H$. Thus $\ell\geq 1$. Since $\ell<t+1$, at the $\ell$th stage there is  a maximal matching $H_1^\ell,\ldots,H_s^\ell$ in $\hh-\widehat{S}$ and $S$ was replaced by $ks$ sequences $(x_1,\ldots,x_\ell,y)$ with $y\in H_1^\ell\cup \ldots\cup H_s^\ell$. Since $H\cap (H_1^\ell\cup \ldots\cup H_s^\ell)\neq \emptyset$, there is some $y_0\in H\cap (H_1^\ell\cup \ldots\cup H_s^\ell)$. Then $(x_1,\ldots,x_\ell,y_0)$ is a longer sequence satisfying $\{x_1,\ldots,x_\ell,y_0\}\subset H$, contradicting the maximality of $\ell$.
\end{proof}

Let $\hb$ be the collection of  $\widehat{S}$ over all sequences $S$ of length $t+1$. By Claim \ref{claim-3.1}, we infer that $\hh\subset \langle \hb\rangle$.

Similarly, if we make a branching process starting at $x$ sequences $(x_1)$ with  $x_1\in X$, then we shall obtain a $\hb'\subset \binom{V(\hh)}{t+1}$ with $|\hb'| \leq x(ks)^{t}$ such that $\hh(\hat{X}) \subset \langle \hb' \rangle$.
\end{proof}

\begin{lem}\label{lem-key2}
Let $\hk_p=\hk_p(n,k)$ and let $t$  be an integer with $1\leq t\ll k$.  If $n\geq 56k^{2+1/t}s$ and $p\geq \frac{8(t+1)k^{t+1}s^t\log n}{\binom{n-1}{k-1}}$, then with high probability \textit{(i)}, \textit{(ii)} and \textit{(iii)} hold.
\begin{itemize}
    \item[(i)] For $R,Q\subset [n]$  with $R\cap  Q=\emptyset$,  $|R|=s-q$ and $|Q|=q$,
    \begin{align}\label{ineq-3-1}
    |\hk_p(\bar{R},\hat{Q})|> \frac{1}{2}pq\binom{n-1}{k-1}.
    \end{align}
    \item[(ii)] For every $R\subset [n]$ with $2\leq |R|\leq t$,
    \[
    |\hk_p(R)|\leq \frac{1}{4|R|(ks)^{|R|-1}}p\binom{n-1}{k-1}.
    \]
    \item[(iii)] For every $T\subset [n]$ with $|T|= t+1$,
    \[
    |\hk_p(T)|\leq \frac{1}{4k^{t+1}s^t}p\binom{n-1}{k-1}.
    \]
\end{itemize}

\end{lem}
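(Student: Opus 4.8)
The plan is to follow the proof of Lemma~\ref{lem-key} almost verbatim: for each of the three statements we compute the expectation of the relevant count in $\hk_p=\hk_p(n,k)$, apply the appropriate Chernoff bound \eqref{chernoff-small} or \eqref{chernoff-large}, and finish with a union bound over all admissible sets. The only structural difference from Lemma~\ref{lem-key} is that instead of ``fans'' and $2$-element sets (at most $n\binom{n}{ks}$ objects) we must now control the links $\hk_p(R)$ of all $r$-sets with $2\le r\le t$ and of all $(t+1)$-sets, i.e.\ up to $n^{t+1}$ objects; this larger union bound is precisely why the hypothesis on $p$ carries the factor $k^{t+1}s^{t}$ rather than the $ks$ of Lemma~\ref{lem-key}, and why we may afford the weaker lower bound $n\ge 56k^{2+1/t}s$.

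For \textit{(i)} I would repeat the computation of Lemma~\ref{lem-key}\textit{(i)}: $|\hk(\bar R,\hat Q)|=\binom{n-|R|}{k}-\binom{n-|R|-|Q|}{k}\ge q\binom{n-s}{k-1}$, and since $n\ge 56k^{2+1/t}s\ge 56k^{2}s$ one has $\binom{n-s}{k-1}\ge \tfrac34\binom{n-1}{k-1}$, so $\mathbb{E}\,|\hk_p(\bar R,\hat Q)|\ge \tfrac34 pq\binom{n-1}{k-1}$. Then \eqref{chernoff-small} with $\varepsilon=1/4$ bounds the failure probability of a fixed pair $(R,Q)$ by $2e^{-\frac{1}{64}pq\binom{n-1}{k-1}}$, and since there are at most $s\,n^{s}$ such pairs over all $1\le q\le s$, the bound $p\binom{n-1}{k-1}\ge 8(t+1)k^{t+1}s^{t}\log n$ makes this union bound $o(1)$.

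For \textit{(ii)} and \textit{(iii)} the expectation is $\mathbb{E}\,|\hk_p(R)|=p\binom{n-|R|}{k-|R|}$, and the elementary estimate $\binom{n-r}{k-r}\big/\binom{n-1}{k-1}=\prod_{i=1}^{r-1}\frac{k-i}{n-i}\le (k/n)^{r-1}$ together with $\frac{k^{2}s}{n}\le \frac{1}{56k^{1/t}}$ (which is just the hypothesis on $n$) gives $(k/n)^{r-1}\le \frac{1}{56^{\,r-1}k^{(r-1)/t}(ks)^{r-1}}$. Using $56^{\,r-1}\ge 28r$ for all $r\ge2$, this is at most $\frac{1}{28r(ks)^{r-1}}$ for $2\le r\le t$, and similarly $(k/n)^{t}\le \frac{1}{28k^{t+1}s^{t}}$. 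Hence in both \textit{(ii)} and \textit{(iii)} the asserted upper bound is at least $7$ times the expectation, so \eqref{chernoff-large} applies and bounds the failure probability of a fixed set by $\exp\big(-\tfrac{1}{4|R|(ks)^{|R|-1}}p\binom{n-1}{k-1}\big)$, respectively $\exp\big(-\tfrac{1}{4k^{t+1}s^{t}}p\binom{n-1}{k-1}\big)$. Summing over the at most $n^{r}$ choices of $R$ with $|R|=r$ (and over $2\le r\le t$) and the at most $n^{t+1}$ choices of $T$, the hypothesis $p\binom{n-1}{k-1}\ge 8(t+1)k^{t+1}s^{t}\log n$ makes every exponent exceed the logarithm of the corresponding count, so these union bounds are again $o(1)$; adding the three $o(1)$ terms finishes the proof.

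The argument is bookkeeping rather than ideas; the one place where the sharper hypothesis $n\ge 56k^{2+1/t}s$ (in place of $n\ge 200k^{3}s$ from Lemma~\ref{lem-key}) is genuinely exploited is \textit{(ii)}, where the target constant $\frac{1}{4|R|(ks)^{|R|-1}}$ depends on $|R|$ and must be matched simultaneously for every $2\le |R|\le t$ --- the extra factor $k^{1/t}$ in the hypothesis is exactly what makes this possible. I expect the only mild care to be in verifying the numerical inequalities $56^{\,r-1}\ge 28r$ for $r\ge 2$ and checking that the count $n^{t+1}$ is absorbed by the exponential decay, so I anticipate no real obstacle.
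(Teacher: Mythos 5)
Your proposal is correct and follows the paper's proof essentially verbatim: compute each link size, apply \eqref{chernoff-small} or \eqref{chernoff-large}, and union-bound over the at most $n^{t+1}$ relevant sets, exactly as in Lemma~\ref{lem-key} but with the larger family of $r$-links, $2\le r\le t+1$. The only slip is in your closing remark: the extra factor $k^{1/t}$ in $n\ge 56k^{2+1/t}s$ is what part \textit{(iii)} genuinely needs (to get $(k/n)^{t}\le 1/(28k^{t+1}s^{t})$ you need $n/k\ge 56k^{1+1/t}s$), whereas $n\ge 56k^{2}s$ already suffices for \textit{(ii)} since $(k/n)^{r-1}\le (56ks)^{-(r-1)}\le 1/(28r(ks)^{r-1})$.
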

\begin{proof}
Clearly, \textit{(i)} follows from the proof of Lemma \ref{lem-key} \textit{(i)}.  Thus we only need to show \textit{(ii)} and \textit{(iii)}. 

\begin{itemize}
    \item[\textit{(ii)}] Let $|R|=r\leq t$. 
Note that $n\geq 56k^{2}s$ implies 
\[
|\hk(R)|=\binom{n-r}{k-r} \leq  \frac{k^{r-1}}{n^{r-1}} \binom{n-1}{k-1}\leq \frac{1}{28r(ks)^{r-1}}\binom{n-1}{k-1}.
\]
By \eqref{chernoff-large}, we infer that
\[
Pr\left(|\hk_p(R))| > \frac{1}{4r(ks)^{r-1}}p\binom{n-1}{k-1}\right)<e^{-\frac{1}{4r(ks)^{r-1}}p\binom{n-1}{k-1}}\le e^{-\frac{1}{4t(ks)^{t-1}}p\binom{n-1}{k-1}}. 
\]
Since the number of choices for $R$ is at most $\sum_{2\leq i\leq t}\binom{n}{i} \leq n^t$, by $p\geq \frac{8t^2(ks)^{t-1}\log n}{\binom{n-1}{k-1}}$ and the union bound, the probability that condition \textit{(ii)} is violated is at most
\[
n^t e^{-\frac{1}{4t(ks)^{t-1}}p\binom{n-1}{k-1}} \leq  e^{t\log n-\frac{1}{4t(ks)^{t-1}}p\binom{n-1}{k-1}}\le n^{-t}\rightarrow 0 \text{~as~}n\rightarrow\infty.
\]
Thus \textit{(ii)} holds.
    \item[\textit{(iii)}] Similarly,  $n\geq 56k^{2+1/t}s$ implies 
\[
|\hk(T)|=\binom{n-t-1}{k-t-1} \leq  \frac{k^t}{n^t} \binom{n-1}{k-1}\leq \frac{1}{28k^{t+1}s^t}\binom{n-1}{k-1}.
\]
By \eqref{chernoff-large}, we infer that
\[
Pr\left(|\hk_p(T))| > \frac{1}{4k^{t+1}s^t}p\binom{n-1}{k-1}\right)<e^{-\frac{1}{4k^{t+1}s^t}p\binom{n-1}{k-1}}. 
\]
Since the number of choices for $T$ is at most $\binom{n}{t+1} \leq n^{t+1}$, by $p\geq \frac{8(t+1)k^{t+1}s^t\log n}{\binom{n-1}{k-1}}$ and the union bound, the probability that condition \textit{(iii)} is violated is at most 
\[
n^{t+1} e^{-\frac{1}{4k^{t+1}s^t}p\binom{n-1}{k-1}} \leq  e^{(t+1)\log n-\frac{1}{4k^{t+1}s^t}p\binom{n-1}{k-1}}\le e^{(t+1)\log n-2(t+1)\log n}=n^{-t-1}\rightarrow 0 \text{~as~}n\rightarrow\infty.
\]
Thus \textit{(iii)} holds.
\end{itemize}

\end{proof}

\begin{proof}[Proof of Theorem \ref{thm:main-2}]
By Lemma \ref{lem-key2}, with high probability, $\hk_p$ satisfies \textit{(i)}, \textit{(ii)} and \textit{(iii)} of Lemma \ref{lem-key2}. Let us fix a hypergraph $\hf$  that satisfies \textit{(i)}, \textit{(ii)}, \textit{(iii)}  and let $\hh\subset \hf$ be a sub-hypergraph with $\nu(\hh)=s$ and  $|\hh|$ maximal. We have to show that $\hh$ is trivial.

Suppose indirectly that $\hh$ is non-trivial.  By Fact \ref{fact-3.1}, there exist $\ell \leq  \nu(\hh)$ and $T_1,T_2,\ldots,T_\ell$ such that \textit{(i)}, \textit{(ii)}, \textit{(iii)} of Fact \ref{fact-3.1} hold. Let $a_i=|T_i|$, $i=1,2,\ldots,\ell$. Recall that $\hh_0=\hh$ and $\hh_i=\hh-(T_1\cup \ldots\cup T_i)$, $i=1,2,\ldots,\ell$. Let $\hh_1'=\hh_0(\hat{T_1})$, $\hh_2'=\hh_1(\hat{T_2})$, $\ldots$, $\hh_\ell'=\hh_{\ell-1}(\hat{T_\ell})$ and $\hh_0'= \hh_\ell$. Since $\hh_0'$ is $t$-resilient, by Lemma \ref{lem-3.1} there exists $\hb_0$ of size at most $(k(s-\ell))^{t+1}$ such that $\hh_0'\subset \langle\hb_0\rangle$. Similarly, for $i\ge 1$, since $\hh_i'=\hh_{i-1}(\hat{T_i})$ and $\hh_{i-1}$ is $(a_i-1)$-resilient, by Lemma \ref{lem-3.1} there exists $\hb_i$ of size at most $a_i(k(s-i+1))^{a_i-1}$ such that $\hh_i'\subset \langle \hb_i\rangle$. Thus $\hh=\hh_0'\cup \hh_1'\cup \ldots\cup\hh_\ell'$ is contained in $\cup_{0\leq i\leq \ell}\langle\hb_i\rangle$.

By relabeling the indices if necessary, we may assume that $a_\ell=a_{\ell-1}=\ldots=a_{\ell-r+1}=1$ and $a_1\geq a_2\geq \ldots\geq a_{\ell-r}\geq 2$. Let $\hb_i=\{\{x_i\}\}$ for $i=\ell-r+1,\ldots,\ell$ and let $R=\{x_i\colon i=\ell-r+1,\ldots,\ell\}$. Let $q=s-r$. Note that for an arbitrary $Q\in \binom{[n]\setminus R}{q}$ the family $\hf(\bar{R},\hat{Q})$ can be used to replace $\hh(\bar{R})$ and the resulting family has matching number $s$.
To get a contradiction we need to show
\begin{align}
|\hf(\bar{R},\hat{Q})|>|\hh(\bar{R})|.
\end{align}

By Lemma \ref{lem-key2} \textit{(i)} we have  $|\hf(\bar{R},\hat{Q})| > \frac{1}{2}pq\binom{n-1}{k-1}$. By Lemma \ref{lem-key2} \textit{(ii)} and $|\hb_i|\leq a_i(k(s-i+1))^{a_i-1}$, we have
\[
\sum_{1\leq i\leq \ell-r}|\hh_i'| \leq \sum_{1\leq i\leq \ell-r}|\hf \cap \langle \hb_i \rangle| \leq \sum_{1\leq i\leq \ell-r} |\hb_i| \frac{1}{4a_i(ks)^{a_i-1}}p\binom{n-1}{k-1} \leq \frac{\ell-r}{4}p\binom{n-1}{k-1}.
\]
 By Lemma \ref{lem-key2} \textit{(iii)} and $|\hb_0|\leq (k(s-\ell))^{t+1}$, 
 \[
 |\hh_0'|\leq |\hf \cap \langle \hb_0 \rangle| \leq |\hb_0|  \frac{1}{4k^{t+1}s^t}p\binom{n-1}{k-1} \leq \frac{s-\ell}{4} p\binom{n-1}{k-1}. 
 \]
Thus,
\[
|\hh(\bar{R})|\leq \sum_{0\leq i\leq \ell-r}|\hh_i'|<\frac{s-\ell+\ell-r}{4} p\binom{n-1}{k-1}=\frac{q}{4} p\binom{n-1}{k-1}<|\hf(\bar{R},\hat{Q})|,
\]
as desired. Thus the theorem holds. 
\end{proof}

\begin{proof}[Proof of Proposition \ref{prop-2.3}]
Let $N$ denote the number of $(s+1)$-matchings in $\binom{[n]}{k}$. Then
\[
N=\binom{n}{(s+1)k} \frac{((s+1)k)!}{k!^{s+1}}.
\]
Since the probability that a fixed $(s+1)$-matching is in $\hk_p$ is $p^{s+1}$, by the union bound we have
\[
Pr(\hk_p \mbox{ contains an $(s+1)$-matching}) \leq Np^{s+1} \leq \binom{n}{(s+1)k} \frac{((s+1)k)!}{k!^{s+1}}p^{s+1}.
\]
We use $(n)_k$ to denote $n(n-1)\ldots(n-k+1)$.
Note that
\[
\binom{n}{(s+1)k} \frac{((s+1)k)!} {(k!)^{s+1}}=  \frac{n(n-1)\ldots(n-(s+1)k+1)}{(k!)^{s+1}} \leq \frac{(n)_k(n-k)_k\ldots(n-sk)_k}{(k!)^{s+1}}
\]
and
\[
(n-ik)_k \leq \left(1-\frac{ik}{n}\right)^k (n)_k \leq e^{-ik^2/n} (n)_k.
\]
Using $p\ll e^{\frac{k^2s}{2n}}/\binom{n}{k}$, we obtain that
\[
\frac{(n)_k(n-k)_k\ldots(n-sk)_k}{(k!)^{s+1}}p^{s+1}  \leq e^{-(1+2+\ldots+s)k^2/n}\binom{n}{k}^{s+1}p^{s+1} \leq  e^{-s(s+1)k^2/(2n)} \binom{n}{k}^{s+1}p^{s+1}\ll 1.
\]
On the other hand, by the union bound and $p\gg \frac{s\log n}{\binom{n-s}{k}}$ we have
\[
Pr(\hk_p \mbox{ is trivial}) \leq \binom{n}{s} (1-p)^{\binom{n-s}{k}}< \binom{n}{s} \exp\left(-p\binom{n-s}{k}\right)\ll 1.
\]
Thus with high probability, $\hk_p$ has matching number at most $s$ and is non-trivial.
\end{proof}

\section{Proof of Theorem \ref{thm:main-3}}
In this section, we prove Theorem \ref{thm:main-3}, which is a result specifically for graphs.

Let $(B,A_1,A_2,\ldots,A_m)$ be a partition of $[n]$. We say that $(B,A_1,A_2,\ldots,A_m)$ is an $s$-partition if the following hold:
\begin{itemize}
    \item[(i)] Let $|A_i|=a_i$, $i=1,2,\ldots,m$, and let $|B|=b$. Each $a_i$ is odd and $a_1\geq a_2\geq \ldots\geq a_m\geq 1$;
    \item[(ii)] $b+\sum\limits_{1\leq i\leq m} \frac{a_i-1}{2}=s$.
\end{itemize}
For an $s$-partition $(B,A_1,A_2,\ldots,A_m)$ of $[n]$, define a graph $G(B,A_1,A_2,\ldots,A_m)$ on the vertex set $[n]$ so that $G[B]$ and $G[A_i]$, $i=1,2,\ldots,m$, are all cliques, and $G[B,A_1\cup \ldots\cup A_m]$ is complete bipartite.

\begin{thm}[Tutte-Berge theorem  or the Edmonds-Gallai Theorem, see \cite{lovasz2009matching}]\label{thm-tbeg}
Let $G$ be a graph on the vertex set $[n]$ with $\nu(G)\leq s$ and $n\geq 2s+2$. Then there is an $s$-partition $(B,A_1,A_2,\ldots,A_m)$ of $[n]$ such that $G$ is a subgraph of $G(B,A_1,A_2,\ldots,A_m)$.
\end{thm}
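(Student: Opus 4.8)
\textbf{Plan for the proof of Theorem \ref{thm-tbeg} (Tutte--Berge / Edmonds--Gallai).}
The statement to be proved is the structural description: any graph $G$ on $[n]$ with $\nu(G)\le s$ and $n\ge 2s+2$ sits inside some $G(B,A_1,\dots,A_m)$ for an $s$-partition. The plan is to deduce this from the classical Gallai--Edmonds Structure Theorem, which I may invoke as a known fact since it predates and underlies the cited reference \cite{lovasz2009matching}. Write $D=D(G)$ for the set of vertices missed by at least one maximum matching, $A=A(G)$ for the neighbours of $D$ outside $D$, and $C=C(G)=[n]\setminus(D\cup A)$. The Gallai--Edmonds theorem gives: every component of $G[D]$ is factor-critical; $G[C]$ has a perfect matching; and the deficiency satisfies $n-2\nu(G)=c(D)-|A|$, where $c(D)$ is the number of components of $G[D]$.

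First I would set $m=c(D)$, label the components of $G[D]$ as $D_1,\dots,D_m$ with $|D_1|\ge\cdots\ge|D_m|$ (each $|D_i|$ odd since factor-critical components have odd order), and set $B=A(G)$, $b=|B|$. To build the candidate $s$-partition I must absorb the ``surplus'' part $C$ into the $A_i$'s; since $n\ge 2s+2>2\nu(G)$ forces $n>2\nu(G)$, i.e.\ the deficiency is positive, so $m=c(D)>|A|=b\ge 0$, hence $m\ge 1$ and there is at least one $D_i$ available. Pair up $C$ by its perfect matching and fold those matched pairs into $D_1$ (the largest component), preserving oddness of the resulting block; alternatively distribute them but folding all into one block is cleanest. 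Set $A_i$ to be $D_i$ for $i\ge 2$ and $A_1=D_1\cup C$. Then $a_i:=|A_i|$ is odd for every $i$, $a_1\ge\cdots\ge a_m\ge1$, and the deficiency identity $n-2\nu(G)=m-b$ together with $n=b+\sum a_i$ gives $\sum_i \frac{a_i-1}{2}=\frac{(\sum a_i)-m}{2}=\frac{(n-b)-m}{2}=\frac{(n-b)-(n-2\nu(G)+b)}{2}=\nu(G)-b$, so $b+\sum_i\frac{a_i-1}{2}=\nu(G)\le s$. If $\nu(G)<s$ I can enlarge $b$ (or split off singleton blocks $A_i$ of size $1$) arbitrarily, since $n\ge 2s+2$ leaves enough room: move $s-\nu(G)$ currently-matched vertices of $C$ (or of some large $A_i$) into $B$, keeping all the block-size parities correct, until equality in (ii) holds. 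This handles condition (i) and (ii) of the definition of $s$-partition.

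The remaining task is to verify the containment $G\subseteq G(B,A_1,\dots,A_m)$, i.e.\ that $G$ has \emph{no} edge which $G(B,A_1,\dots,A_m)$ lacks. The forbidden edges are exactly those joining two distinct blocks among $A_1,\dots,A_m$ (edges inside a block, inside $B$, and between $B$ and $\bigcup A_i$ are all present in $G(B,A_1,\dots,A_m)$ since those are cliques / complete bipartite). By the Gallai--Edmonds theorem the $D_i$ are distinct connected components of $G[D]$, so $G$ has no edge between $D_i$ and $D_j$ for $i\ne j$; and $G$ has no edge between $D$ and $C$ by definition of $C$ (vertices of $C$ are not adjacent to $D$ since $A=A(G)$ already collects all such neighbours and $C\cap A=\emptyset$). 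Hence after folding $C$ into $A_1$, the only possible $A_i$--$A_j$ edges of $G$ with $i\ne j$ would be $C$--$D_j$ edges ($j\ge2$) or $D_1$--$D_j$ edges, both of which are absent; likewise any singleton blocks $A_i$ carved out of $B\cup C$ inherit no bad edges because their single vertex had none to the relevant side. So no forbidden edge of $G$ exists, giving the containment.

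\textbf{Main obstacle.} The genuine mathematical content — that $G[D]$'s components are factor-critical, $G[C]$ has a perfect matching, and the deficiency formula holds — is precisely the Gallai--Edmonds theorem, which I am treating as known. The real care needed in \emph{this} write-up is purely combinatorial bookkeeping: absorbing $C$ and padding to reach $\nu(G)=s$ exactly while (a) keeping every block size odd, (b) keeping the sizes weakly decreasing, and (c) not creating any inter-block edge. The parity juggling in the padding step (each transfer into $B$ must remove an \emph{even} number from a block, or convert an even-size leftover into singletons) is the one spot where an over-hasty argument can go wrong, so I would state explicitly that $C$ comes with a perfect matching and always move vertices in matched pairs, and that any block reduced to even size is further split into size-$1$ blocks. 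Everything else is routine.
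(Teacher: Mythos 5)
The paper does not prove this statement at all: it is quoted as the classical Tutte--Berge/Edmonds--Gallai theorem with a citation to Lov\'asz--Plummer, so there is no in-paper argument to compare against. Your proposal supplies the standard derivation of this particular packaging from the Gallai--Edmonds Structure Theorem, and the core of it is correct: taking $B=A(G)$, the factor-critical components of $G[D]$ as the odd blocks, folding $C$ into one block (legitimate since $C$ has no edges to $D$ and intra-block edges are unrestricted), and using the deficiency identity $n-2\nu=c(D)-|A|$ to get $b+\sum_i\frac{a_i-1}{2}=\nu(G)\le s$, with $n\ge 2s+2$ guaranteeing $D\neq\emptyset$. The containment check (the only forbidden edges are between distinct blocks, and these are absent because distinct $D_i$'s are distinct components of $G[D]$ and there are no $C$--$D$ edges) is also right.

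Two details in the padding step deserve correction, though both are easily repaired. First, your fallback ``any block reduced to even size is further split into size-$1$ blocks'' is not sound in general: splitting a block that contains edges of $G$ (e.g.\ part of a factor-critical component) creates inter-block edges, destroying the containment. You should simply never reduce a block to a nonzero even size: the only moves you need are (a) transferring two vertices of a block of size $\ge 3$ into $B$ (count increases by exactly $1$, parity preserved, containment preserved since $B$ is universal) and (b) transferring a whole singleton block into $B$ (again $+1$). Second, ``$n\ge 2s+2$ leaves enough room'' should be made precise: as long as the current count is less than $n$ some block remains, so a $+1$ move of type (a) or (b) is available; since pushing everything into $B$ gives count $n\ge 2s+2>s$, one can stop exactly at $s$. (Also, to raise the count by $s-\nu$ you move $s-\nu$ \emph{pairs}, i.e.\ $2(s-\nu)$ vertices, not $s-\nu$ vertices.) With these fixes your argument is a complete and correct proof of the stated form, granting the Gallai--Edmonds theorem as known -- which is consistent with how the paper itself treats the result.
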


By applying Theorem \ref{thm-tbeg}, Erd\H{o}s and Gallai determined the maximum number of edges in a graph with matching number at most $s$, which is the $k=2$ case of the Erd\H{o}s Matching Conjecture.

\begin{thm}[\cite{erd6s1959maximal}]
Let $G$ be a graph on $n$ vertices. If $\nu(G)\leq s$ and $n\geq 2s+2$, then
\[
e(G) \leq \max\left\{\binom{2s+1}{2},\binom{s}{2}+s(n-s) \right\}.
\]
\end{thm}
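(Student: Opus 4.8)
The plan is to reduce the extremal bound to the structure theorem already recalled as Theorem~\ref{thm-tbeg} and then solve a small optimization over $s$-partitions. Since $\nu(G)\le s$ and $n\ge 2s+2$, Theorem~\ref{thm-tbeg} produces an $s$-partition $(B,A_1,\ldots,A_m)$ of $[n]$ with $G\subseteq G(B,A_1,\ldots,A_m)$, so $e(G)\le e(G(B,A_1,\ldots,A_m))$. Writing $b=|B|$ and $a_i=|A_i|$ as in the definition of an $s$-partition, the edges of $G(B,A_1,\ldots,A_m)$ split into the clique on $B$, the cliques on the $A_i$, and the complete bipartite graph between $B$ and the rest, hence
\[
e(G(B,A_1,\ldots,A_m))=\binom{b}{2}+\sum_{i=1}^{m}\binom{a_i}{2}+b(n-b),
\]
subject to $b+\sum_i\frac{a_i-1}{2}=s$ with the $a_i$ odd and positive. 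It therefore suffices to show that this quantity never exceeds $\max\{\binom{2s+1}{2},\binom{s}{2}+s(n-s)\}$.

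First I would fix $b$ (note $0\le b\le s$) and maximize $\sum_i\binom{a_i}{2}$ under the constraint $\sum_i(a_i-1)=2(s-b)$. Setting $w_i=a_i-1\ge 0$, one has $\sum_i\binom{a_i}{2}=\tfrac12\bigl(\sum_i w_i^2+2(s-b)\bigr)$ and $\sum_i w_i^2\le(\sum_i w_i)^2=4(s-b)^2$, with equality for the partition having one part of size $2(s-b)+1$ and all remaining parts of size $1$. This partition is admissible: the parts stay odd, and $n-b\ge 2s+2-b>2(s-b)+1$ leaves enough vertices for the size-$1$ parts. Consequently
\[
e(G(B,A_1,\ldots,A_m))\le f(b):=\binom{b}{2}+\binom{2(s-b)+1}{2}+b(n-b).
\]

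Next I would simply expand $f$: a direct computation gives $f(b)=\tfrac32 b^2+\bigl(n-4s-\tfrac32\bigr)b+2s^2+s$, a quadratic in $b$ with positive leading coefficient and hence convex. Therefore $\max_{0\le b\le s}f(b)=\max\{f(0),f(s)\}$, and one checks $f(0)=\binom{2s+1}{2}$ and $f(s)=\binom{s}{2}+s(n-s)$, which completes the argument.

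I do not expect a genuine obstacle here. The step requiring the most care is the reduction in the second paragraph: one must verify that concentrating all of the ``excess'' $2(s-b)$ into a single part $A_i$ keeps the partition admissible (all parts odd, and enough leftover vertices for the size-$1$ parts), and this is exactly where the hypothesis $n\ge 2s+2$ is used. Everything else is the structure theorem together with the elementary convexity of $x\mapsto\binom{x}{2}$ and of the one-variable quadratic $f(b)$; the explicit extremizers $K_{2s+1}$ (plus isolated vertices) and $K_s\vee\overline{K}_{n-s}$ correspond to $b=0$ and $b=s$ respectively, so the bound is also seen to be tight.
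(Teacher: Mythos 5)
Your proof is correct and matches the paper's approach: reduce to $G(B,A_1,\ldots,A_m)$ via Theorem~\ref{thm-tbeg}, merge the odd cliques into a single one by convexity, and optimize the resulting quadratic in $b$ over $[0,s]$. The paper phrases the merging step as the two-term inequality $\binom{2r_1+1}{2}+\binom{2r_2+1}{2}\le\binom{2(r_1+r_2)+1}{2}$ rather than your $\sum_i w_i^2\le(\sum_i w_i)^2$, and leaves the final quadratic-in-$b$ step implicit, but the route is the same (note also that the admissibility check you flag is not actually needed, since that step is only an upper bound; $n\ge 2s+2$ is used to invoke Theorem~\ref{thm-tbeg} in the first place).
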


Let us mention that very recently by combining the Tutte-Berge theorem and other techniques, Alon and the first author \cite{alon2024turan} determined the maximum number of edges in a $K_{r+1}$-free graph with matching number at most $s$.

Note that
\[
\binom{2r_1+1}{2} +\binom{2r_2+1}{2} \leq \binom{2(r_1+r_2)+1}{2}.
\]
Let $a_i=2r_i+1$, $i=1,2,\ldots,m$ and let $r_\ell\geq 1$ and $r_{\ell+1}=\ldots=r_m=0$. By \textit{(ii)} we have
$r_1+r_2+\ldots+r_\ell+b=s$.
Then
\begin{align}
e(G(B,A_1,A_2,\ldots,A_m)) &= \binom{b}{2} +\sum_{1\leq i\leq \ell}\binom{a_i}{2} +b\sum_{1\leq i\leq m} a_i\nonumber\\[3pt]
&\leq \binom{b}{2} +\binom{2(r_1+r_2+\ldots+r_\ell)+1}{2} +b(n-b)\nonumber\\[3pt]
&\leq \binom{b}{2} +\binom{2(s-b)+1}{2} +b(n-b)\nonumber\\[3pt]
&\leq \max\left\{\binom{2s+1}{2},\binom{s}{2}+s(n-s)\right\}.
\end{align}

\begin{proof}[Proof of Theorem\ref{thm:main-3}]
Let
\[
f(n,s):=\max\left\{\binom{2s+1}{2},\binom{s}{2}+s(n-s)\right\}.
\]
Let $\hg$ be the family of all graphs with $\nu(G)\leq s$ on the vertex set $[n]$. Then
\begin{align*}
&Pr\left(X\geq (1+\varepsilon)pf(n,s)\right)\\[3pt]
=&Pr\left(\exists G\in \hg, e(G\cap G(n,p))\geq (1+\varepsilon)pf(n,s)\right)\\[3pt]
=&Pr\left(\exists \mbox{ an $s$-parition }(B,A_1,A_2,\ldots,A_m), e(G(B,A_1,A_2,\ldots,A_m)\cap G(n,p))\geq (1+\varepsilon)pf(n,s)\right).
\end{align*}

Assume that  $|A_{\ell+1}|=\cdots =|A_m|=1$ for some $\ell\in [m]$, the $s$-partition $(B,A_1,A_2,\ldots,A_m)$ is determined by $(B,A_1,A_2,\ldots,A_\ell)$. By \textit{(ii)} we have
\[
b+\sum\limits_{1\leq i\leq \ell} \frac{a_i-1}{2}=s.
\]
It follows that $\ell\leq s-1$ and $b+a_1+a_2+\ldots+a_\ell< 3s$.  Thus the total number of $s$-partitions is at most $s^{3s}n^{3s}$.

Let $(B,A_1,A_2,\ldots,A_m)$ be an  $s$-partition of $[n]$ and let  $G=G(B,A_1,A_2,\ldots,A_m)$. If $e(G)\geq \frac{1}{7}f(n,s)$, then by \eqref{chernoff-small},
\[
Pr\left(e(G\cap G(n,p))\geq (1+\varepsilon)pf(n,s)>(1+\varepsilon)pe(G)\right)\leq e^{-\frac{\varepsilon^2pe(G)}{3}} \leq e^{-\frac{\varepsilon^2pf(n,s)}{21}}.
\]
If $e(G)< \frac{1}{7}f(n,s)$, then $(1+\varepsilon)pf(n,s)\geq 7pe(G)$. By \eqref{chernoff-large} we have
\[
Pr\left(e(G\cap G(n,p))\geq (1+\varepsilon)pf(n,s)\right) \leq e^{-(1+\varepsilon)pf(n,s)}\leq e^{-\frac{\varepsilon^2pf(n,s)}{21}}.
\]
Thus, by the union bound we conclude that
\[
Pr\left(\exists G\in \hg, e(G\cap G(n,p))\geq (1+\varepsilon)pf(n,s)\right)\leq s^{3s}n^{3s}e^{-\frac{\varepsilon^2pf(n,s)}{21}} =e^{-\frac{11f(n,s)\log n}{n}+3s\log(sn)}.
\]
If $n\geq \frac{5s+3}{2}$, then $s<\frac{2n}{5}$ and $f(n,s)=\binom{s}{2}+s(n-s)> \frac{3sn}{4}$. If $2s+2 \leq n< \frac{5s+3}{2}$, then $f(n,s)=\binom{2s+1}{2}>\frac{3sn}{4}$ as well. It follows that
\[
\frac{11f(n,s)\log n}{n}-3s\log(sn) > 7s\log n - 6s\log n =s\log n.
\]
Thus with probability at least $1-n^{-s}$,
\[
X\leq (1+\varepsilon)p\max\left\{\binom{2s+1}{2},\binom{s}{2}+s(n-s)\right\}.
\]

For the lower bound, let us consider the graph $G_1=\binom{[2s+1]}{2}$ and $G_2=\binom{[s]}{2}\cup ([s]\times [s+1,n])=\binom{[n]}{2}\setminus \binom{[s+1,n]}{2}$. Then by \eqref{chernoff-small} we have
\[
Pr\left(e(G_1\cap G(n,p))\leq (1-\varepsilon)p e(G_1)\right)\leq  e^{-\frac{\varepsilon^2pe(G_1)}{3}}.
\]
and
\[
Pr\left(e(G_2\cap G(n,p))\leq (1-\varepsilon)p e(G_2)\right)\leq  e^{-\frac{\varepsilon^2pe(G_2)}{3}}.
\]
Since $\max\{e(G_1),e(G_2)\}\geq \frac{3sn}{4}$, we infer that for $i=1$ or $2$,
\[
\frac{\varepsilon^2pe(G_i)}{3} \geq  \frac{250\log n}{3n} \times  \frac{3sn}{4}\geq 60s \log n.
\]
It follows that with probability $1-n^{-60s}$, we have
\[
X \geq (1-\varepsilon)p\max\left\{\binom{2s+1}{2},\binom{s}{2}+s(n-s)\right\}.
\]
Thus the theorem is proven.
\end{proof}

\bibliographystyle{abbrv}
\bibliography{refs}

\end{document}